\numberwithin{equation}{section}
\newtheorem{theorem}{Theorem}
\newtheorem{assumption}{Assumption}
\newtheorem{definition}{Definition}
\newtheorem{corollary}{Corollary}
\newtheorem{remark}{Remark}
\newtheorem{lemma}{Lemma}
\def \Vh0{\stackrel{\circ}{V}_h} \def\to{\rightarrow}
 \def\v{{\bf v}}
\newcommand{\lc}
{\mathrel{\raise2pt\hbox{${\mathop<\limits_{\raise1pt\hbox
{\mbox{$\sim$}}}}$}}}
\newcommand{\gc}
{\mathrel{\raise2pt\hbox{${\mathop>\limits_{\raise1pt\hbox{\mbox{$\sim$}}}}$}}}
\newcommand{\ec}
{\mathrel{\raise2pt\hbox{${\mathop=\limits_{\raise1pt\hbox{\mbox{$\sim$}}}}$}}}
\def\bb{\begin{equation}} \def\ee{\end{equation}}
\def\beqn{\begin{eqnarray}}  \def\eqn{\end{eqnarray}}
\def\beqnx{\begin{eqnarray*}} \def\eqnx{\end{eqnarray*}}
\def\bn{\begin{enumerate}} \def\en{\end{enumerate}}
\def\bd{\begin{description}} \def\ed{\end{description}}
\DeclareMathOperator*{\argmin}{arg\,min}
\newcommand{\EE}{\mathbb{E}}
\newcommand{\RR}{\mathbb{R}}
\newcommand{\prox}{\textbf{Prox}}
\newcommand{\Min}{\textrm{minimize}}
\newcommand{\PP}{\mathbb{P}}
\begin{document}

\title{Markov Chain Block Coordinate Descent}
\author{Tao Sun\thanks{
Department of Mathematics, National University of Defense Technology,
Changsha, 410073, Hunan,  China. Email: \texttt{nudtsuntao@163.com}. }
\and Yuejiao Sun\thanks{
  Department of Mathematics, UCLA, 601 Westwood Plz, Los Angeles, CA 90095, USA, Email: \texttt{sunyj@math.ucla.edu}.}
  \and Yangyang Xu\thanks{
 Department of Mathematical Sciences,
Rensselaer Polytechnic Institute, Troy, NY 12180, USA, Email: \texttt{xuy21@rpi.edu}.}
  \and   Wotao Yin\thanks{
  Department of Mathematics, UCLA, 601 Westwood Plz, Los Angeles, CA 90095, USA, Email: \texttt{wotaoyin@math.ucla.edu}, corresponding author.}
}

\maketitle

\begin{abstract}
The method of block coordinate gradient descent (BCD) has been a powerful method for large-scale optimization. This paper considers 
the BCD method that successively updates a series of blocks selected according to a Markov chain.
This kind of block selection is neither i.i.d. random nor cyclic. On the other hand, it is a natural choice for some applications in distributed optimization and Markov decision process, where i.i.d. random and cyclic selections are either infeasible or very expensive.
By applying mixing-time properties of a Markov chain, we prove convergence of Markov chain BCD for minimizing Lipschitz differentiable functions, which can be nonconvex. When the functions are convex and strongly convex, we establish both sublinear and linear convergence rates, respectively. We also present a method of Markov chain inertial BCD. Finally, we discuss potential applications.
\end{abstract}

\textbf{KEYWORDS} block coordinate gradient descent, Markov chain, Markov chain Monte Carlo, Markov decision process, decentralized optimization

\textbf{AMS}  Primary: 90C26; secondary:  	90C40, 68W15
\tableofcontents

\section{Introduction}
We consider
 the following  minimization problem
 \begin{equation}\label{model}
    \Min~~f(x)\equiv f(x_1,x_2,\ldots,x_N)
 \end{equation}
 where $f:\mathbb{R}^N\mapsto \mathbb{R}$ is a differentiable  function (possibly nonconvex) and every $\nabla_i f$  ($i=1,2,\dots,N$) is Lipschitz with constant $L>0$.

The block coordinate gradient descent (BCD) method is a popular approach that can take the advantage of the coordinate structure in \eqref{model}. The method updates one coordinate, or a block of coordinates, at each iteration, as follows. For $k=0,1,\ldots$, choose $i_k\in [N]:=\{1,2,\ldots,N\}$ and compute
 \begin{equation}\label{y-CD}
 x^{k+1}_{i_k} =x^k_{i_k} - \gamma \nabla_{i_k} f(x^{k}),
\end{equation}
where $\gamma$ is a step size; for remaining $j\in[N]\backslash\{ i_k\}$, we keep $x_j^{k+1}=x_j^k$.

The coordinate gradient descent method was introduced in \cite{TsengYun2009_coordinate}. The random selection rule (i.i.d. over the iterations) appeared in \cite{shalev2011stochastic,nesterov2012efficiency}. In the same paper \cite{nesterov2012efficiency}, the method of accelerated coordinate gradient descent was proposed, and it was later analyzed in \cite{LeeSidford2013_efficient} for both convex and strongly convex functions. Both \cite{nesterov2012efficiency,LeeSidford2013_efficient} select a coordinate $i$ with probability proportional to the Lipschitz constant $L_i$ of $g(\alpha)=\nabla_i f(x+\alpha e_i)$ over free $x$; the rate is optimal when $L_i$'s are equal. An improved random sampling method with acceleration was introduced in~\cite{Allen-ZhuQuRichtarikYuan2015_EvenFaster}, which further decreases the complexity when some $L_i$'s are significantly smaller than the rest. This method was further generalized in~\cite{HannahFengYin2018_a2bcd} to an asynchronous parallel method, which obtains parallel speedup to the accelerated rate.  
In another line of work, \cite{DangLan2015_stochastic} combines stochastic coordinate gradient descent with mirror descent stochastic approximation, where a random data mini-batch is taken to update a randomly chosen coordinate. This is improved in \cite{XuYin2015_block}, where the presented method uses each random mini-batch to update all the coordinates in a sequential fashion.
Besides stochastic selection rules, there has been work of the cyclic sampling rule. The work~\cite{XuYin2013_block} studies its convergence under the convex and nonconvex settings, and \cite{BeckTetruashvili2013_convergence} proves sublinear and linear rates in the convex setting. The constants in these rates are worse than standard gradient descent though. For a family of problems, \cite{sun2015improved} obtains improved rates to match standard gradient descent (and their results also apply to the random shuffling rule).
The greedy sampling rule has also been studied in the literature but unrelated to this paper. Let us just mention some references~\cite{LiOsher2009_coordinate,PengYanYin2013_parallel,LiUschmajewZhang2015_convergence,NutiniSchmidtLaradjiFriedlanderKoepke2015_coordinate}.
Finally, \cite{PengWuXuYanYin2016_coordinate}~explores the family of problems with the structure that enables us to update a block coordinate at a much lower cost than updating all blocks in batch.


This paper introduces the Markov-chain select rule. We call our method \emph{Markov-chain block gradient coordinate descent} (MC-BCD). In this method, $i_k$ is selected according to a Markov chain; hence, unlike the above methods, our choice is \emph{neither} stochastic i.i.d. (with respect to $k$) \emph{nor} deterministic. Specifically, there is an underlying strongly-connected graph
$\mathcal{G} = (\mathcal{V}, \mathcal{E})$ with the set of vertices $\mathcal{V}:=[N]$ and set of edges $\mathcal{E}\subseteq \mathcal{V}\times \mathcal{V}$.
Each node $i\in \mathcal{V}$ can compute $\nabla_i f(\cdot)$ and update $x_i$. We call $(i_k)_{k\ge 0}$ a walk of $\mathcal{G}$ if every $(i_k,i_{k+1})\in\mathcal{E}$. If the walk $(i_k)_{k\ge 0}$ is deterministic and visits every node at least once in every $K$ iterations, then $(i_k)_{k\ge 0}$ is essentially cyclic; if every $i_{k+1}$ is chosen randomly from  $\{\text{neighbors of }i_k\}\cup\{i_k\}$, then we obtain MC-BCD, which is the focus of this paper. To the best of our knowledge, MC-BCD is new.


\subsection{Motivations}\label{reduc}
Generally speaking, one does not use MC-BCD to accelerate i.i.d. random or cyclic BCD but for other reasons: 
When we are forced to take Markov chain samples because cyclic and stochastic samples are not available; Or, although   cyclic and stochastic samples are available, it is easier or cheaper to take Markov chain samples. We briefly present some examples below to illustrate those motivations. Some examples are tested numerically in Section 6 below.



\textbf{Markov Chain Dual Coordinate Ascent (MC-DCA).}
The paper~\cite{shalev2013stochastic} proposes the Stochastic Dual Coordinate Gradient Ascent (SDCA) to solve
\begin{align}\label{du-prim}
    \Min_{w\in\mathbb{R}^d}\{\frac{\lambda}{2}\|w\|^2+\frac{1}{N}\sum_{i=1}^N \ell_{i}(w^{\top} a_i)\},
\end{align}
where $\lambda>0$ is the regularization parameter, $a_i$ is the data vector associated with $i$th sample, and $\ell_i$ is a convex loss function. Its dual problem can be formulated as
\begin{align}\label{dual}
    \Min_{\alpha\in \mathbb{R}^{N}}\{D(\alpha):=\frac{\lambda}{2}\|A\alpha\|^2+\frac{1}{N}\sum_{i=1}^N \ell_{i}^*(-\alpha_i)\},
\end{align}
where $A\in\mathbb{R}^{d\times N}$ with column $A_i:=\frac{a_i}{\lambda N}$, and $\ell_i^*$ is the conjugate of $\ell_i$. By applying stochastic BCD to \eqref{dual}, SDCA can reach comparable or better convergence rate than stochastic gradient descent. We employ this idea and propose MC-DCA : in the $k$th iteration,  while $x_j^{k+1}=x_j^k$ if $j\in[N]\backslash\{ i_k\}$,
\begin{align}\label{mcdca}
    \alpha^{k+1}_{i_k}=\alpha^k_{i_k}-\gamma\Big(\lambda A_{i_k}^{\top} (A \alpha^k)-\frac{\nabla \ell_{i_k}^*(-\alpha_{i_k}^k)}{N}\Big)
\end{align}
where  $(i_k)_{k\ge 0}$ is a Markov chain.

The Markov chain must come from somewhere. Consider that the data $a_1, a_2, \ldots, a_N$ are stored in a distributed fashion over a graph. Only when the graph is complete can we efficiently sample $i_k$ i.i.d. randomly and access $a_{i_k}$; only when the graph has a Hamiltonian cycle can we visit the data in a cyclic fashion without visiting any node twice in each cycle. MC-DCA works under a much weaker assumption: as long as the graph is connected.
Specifically, let a token hold $(\alpha_1,\alpha_2,\ldots,\alpha_N)$ and vector $A\alpha$, and let the token randomly walk through the nodes in the network; each node $i$ holds data $A_i$ and can compute $\nabla \ell_i^*$;
as the token arrives at node $i$, the node accesses $(A\alpha)$ and $\alpha$ and computes  $\lambda A_{i}^{\top} (A \alpha)$ and  $\nabla \ell_i^*(-\alpha_{i})$, which are used to update $\alpha_i$ and update $(A\alpha)$.

\textbf{Future rewards in a Markov decision process.}
This example is a finite-state ($N$ states) Discounted Markov Decision Process (DMDP) for which we can compute the transition probability from any current state $i$ to the next state, or quickly approximate it. We can use MC-BCD to compute the expected future reward vector.

Let us describe the DMDP. Entering any state $i$, we receive an award $r_i$ and then take an action according to a given policy $\pi$ (a state-to-action mapping). After the action is taken, the system enters a state $j$, $j\in[N]$, with probability $P_{i,j}$. The transition matrix $P:=[P_{i,j}]_{i,j\in[N]}$ depends on the action taken and thus depends on $\pi$. The reward discount factor is $\gamma \in(0,1)$.
Our goal is to evaluate the expected future rewards of all states $i\in [N]$ for fixed $\pi$. This step dominates the per-step computation of the policy-update iteration \cite{sutton1998reinforcement}, which iteratively updates $\pi$. 

For each state $i_0 := i$, the expected future reward is given as
$$v_i:=\EE_{\{i_t\}}\Big[\sum_{t=1}^{+\infty}\gamma^t r_{i_t}\mid i_0 = i \Big],$$ where the state sequence $(i_t)_{t\ge 0}$ is a Markov chain induced by the transition matrix $P$ and $r_{i_t}$ is the reward received at time $t$. The corresponding Bellman equation is $v_i = \EE_{i_1}\big[r_i + \gamma v_{i_1}\mid i_0 = i\big] = r_i + \gamma\sum_{j\in [N]}P_{i,j}v_j$, the matrix form of which is
\begin{align}\label{bellman}
    v=r+\gamma P v,
\end{align}
where $v=[v_1, v_2,\ldots, v_N]^T$ and $r=[r_1, r_2,\ldots, r_N]^T$. 

When $N$ is huge, solving \eqref{bellman} is difficult. Often we have memory to store a few $N$-vectors (also, $N$ can be reduced by dimension reduction) but not an $N\times N$-matrix. Therefore, we can store the vector $P_i = [P_{i,1},\dots,P_{i,N}]^T$ only temporarily in each iteration. In the case where the physical principles or the rule of game are given, such as in the Tetris game, we can compute the transition probabilities $P_{i_k}$ explicitly. 
Consider another scenario where $P_{i_k}$ can not be computed explicitly but can be approximated by Monte-Carlo simulations. The simulation of transition at just one state $i_k$ is much cheaper than that of all states. In both scenarios, we have access to $P_{i_k}$. This allows us to apply MC-BCD to solve a dual optimization problem below to compute the future reward vector $v$,
\begin{align}\label{bellman2}
    \Min_{v}\, \{\frac{1}{2N}\|(\mathbb{I}_{N}-\gamma P)v-r\|^2+\frac{\lambda}{2}\|v\|^2\},
\end{align}
where $\lambda\geq 0$ is a fixed regularization parameter.
This corresponds to setting $A:=\mathbb{I}_{N}-\gamma P$ in \eqref{dual}. Note that in DMDP, one cannot transit from the current state $i_k$ to an arbitrary $j\in[N]$. Therefore, standard cyclic and  stochastic BCD is not applicable.

Running the MC-DCA iteration \eqref{mcdca} requires the vectors $A_{i_k} = P_{i_k}$ and $A\alpha^k = \alpha^k - \gamma P\alpha^k$. We update $(P\alpha^k)$ by maintaining a sequence $(w^k)_{k\ge 0}$ as follows: initialize $\alpha^0:=0$ (vector zero) and thus $w^0 = P\alpha^0 = 0$; in $k$th iteration, we compute $w^{k+1} := w^{k} + P_{i_k}(\alpha^{k+1}-\alpha^{k})_{i_k} = P\alpha^{k+1}$, where the equality follows since $\alpha^{k+1}$ and $\alpha^{k}$ only differ over their $i_{k}$th component. This update is done without accessing the full matrix $P$.

As we showed above, running our algorithm to compute the expected future award $v$ only requires $O(N)$ memory. Also the algorithm iterates simultaneously while the system samples its state trajectory. Suppose each policy $\pi$ can be stored in $O(N)$ memory (e.g., deterministic policy) and updating $\pi$ using a computed $v$ also needs $O(N)$ memory; then, we can a policy-update iteration with $O(N)$ memory.

\textbf{Risk minimization by dual coordinate ascent over a tricky distribution.}
Let $\Xi$ be a statistical sample space with distribution $\Pi$, and  $F(\cdot):\mathbb{R}\rightarrow \mathbb{R}$ is a proper, closed, strongly convex function.
Consider the following regularized expectation minimization problem
\begin{align}\label{regerm}
    \Min_{w\in \mathbb{R}^n} ~~\EE_{\xi}\big(F(w^{\top}\xi)\big)+\frac{\lambda}{2}\|w\|^2
\end{align}
Since the objective is strongly convex, its dual problem is smooth. If it is easy to sample data from $\Pi$, \eqref{regerm} can be solved by SDCA, which uses i.i.d. samples.
When the distribution $\Pi$ is difficult to sample directly but has a faster Markov Chain Monte Carlo (MCMC) sampler, 
we can apply MC-DCA to this problem.  

\textbf{Multi-agent resource-constrained optimization.} Consider the  multi-agent optimization problem of $n$ agents \cite{brucker1999resource}:
\begin{align}\label{multi-agentopt}
    \Min\, f(x_1,x_2,\ldots,x_N)+\frac{\beta}{2}\|\max\{Ax - b, \textbf{0}\}\|^2,
\end{align}
where $f$ is the cost function, $b$ is the resource vector, and  $\max\{Ax - b, \textbf{0}\}$ penalizes any over usage of resources. Define a graph, in which every node is an agent and every edge connects a pair of agents that either depend on one another in $f$ or share at least one resource. In other words, the objective function \eqref{multi-agentopt} has a graph structure in that computing the gradient of $x_i$  requires only the information of the adjacent agents of $i$.

MC-BCD becomes a {decentralized} algorithm: after an agent $i_k$ updates its decision variable $x_{i_k}$, it broadcasts $x_{i_k}$ to one of its neighbors, $i_{k+1}$ and activates it to run next step. 
In this process, $i_0,i_1,\dots$ form a random walk over the graph and, therefore, is a Markov chain. As long as the network is connected, a central coordinator is no more necessary. However, sampling $i_k$ i.i.d. randomly requires a central coordinator and will consume more communication since it may communicate beyond neibors. Also selecting $i_k$ essentially cyclicly  requires a tour of the graph, which relys on the knowledge of the graph topology.

 When $f$ is differentiable with Lipschitz continuous gradient, so is the objective function. We apply MC-BCD to \eqref{multi-agentopt} to obtain
\begin{align}\label{schem-mcbc-multi-agent}
    x^{k+1}_{i_k}=x^k_{i_k}-\gamma \nabla f_{i_k}(x^k)-\gamma \beta A_{i_k}^{\top}\max\{Ax^k-b,\textbf{0}\},
\end{align}
where $(i_k)_{k\geq}\subseteq [N]$ is a Markov chain.
We assume that agent $i$ can access $A_i$ and $b_i$ and compute $\nabla_i f$. Similar to the example for computing expected future reward above, $v^k:=Ax^k-b$ can be updated along with the iterations so no node needs the access to the full matrix $A$. Alternatively, we can use a central governor which receives updated $x^k$ and $v^k$ from agent $i_k$ and sends the data to $i_{k+1}$ for the next iteration.  

\textbf{Decentralized optimization.}
This example is taken from \cite{yin2018communication}. Again consider the empirical risk minimization problem \eqref{du-prim}.
We consider solving its dual problem \eqref{dual} in a network by assigning each sample $a_i$ 
to a node.
A parallel distributed algorithm will update for all the components, $i=1,...,N$, concurrently.

If the network has a central server, then each node sends its  $\alpha_i$ to the central server, which forms $A\alpha=\sum_{i=1}^n A_i \alpha_i$ and then broadcasts it back to the nodes.

If the network does not have a central server, then we can form $A\alpha$ either running a decentralized gossip algorithm or calling an all-reduce communication. The former does not require the knowledge of the network topology and is an iterative method. The latter requires the topology and takes at least $O(\log N)$ rounds and at least $O(N)$ total communication, even slower when the network is sparse. An alternative approach is to create a token that holds $A\alpha$ and follows a random walk in the network. The token acts like a traveling center. When the token arrives at a node $i_k$, the node updates its $\alpha_{i_k}$ using the token's $A\alpha$, and this local update leads to a sparse change to $A\alpha$; updating $A\alpha$ requires no access to $\alpha_j$ for $j\neq i_k$. The method in \cite{yin2018communication} applies this idea to an ADMM formulation of the decentralized consensus problem (rather than BCD in this paper) and shows that total communication is significantly reduced.



\subsection{Difficulty of the convergence proofs: biased expectation}
Sampling according to a Markov chain is neither (essential) cyclic nor i.i.d. stochastic. No matter how large $K$ is, it is still possible that a node is never visited during some $k+1,...,k+K$ iterations. Unless the graph $\mathcal{G}$ is a complete graph (every node is directly connected with every other node), there are nodes $i,j$ \emph{without} an edge connecting them, i.e., $(i,j)\not\in \mathcal{E}$. Hence, given $i_{k-1} = i$, it is \emph{impossible} to have $i_{k}=j$. So, no matter how one selects the sampling probability $p_k=\PP(i_k = p_k)$ and step size $\gamma_k$, we generally do \emph{not} have
$     \EE_{i_k}(\gamma_k\vec{\nabla}_{i_k} f(x^k)\mid i_{k-1}=i)= C \nabla f(x^k)$
for any constant $C$, where $\vec{\nabla}_{i_k} f(x^k):=[0,\dots,0,{\nabla}_{i_k} f(x^k),0,\dots,0]^T$. This, unfortunately, breaks down all the existing analyses of stochastic BCD since they all need a non-vanishing probability for every block $1,\ldots,N$ to be selected.



\subsection{Proposed method and contributions}
Given a graph $\mathcal{G}=(\mathcal{V},\mathcal{E})$, MC-BCD is written mathematically as
\begin{subequations}\label{MC-BCD}
\begin{align}
\label{SP}
 \text{sample}~& i_{k}\in\{j:(i_{k-1},j)\in \mathcal{E}\} \sim P_{i_{k-1},j}(k),\\
\label{CD}
\text{compute}~& x^{k+1}_{i_k} = x^k_{i_k} - \gamma \nabla_{i_k} f(x^{k}),
\end{align}
\end{subequations}
where $\gamma$ is a constant stepsize, and $P(k)$ is the transition matrix in the $k$th step (details given in Sec. 2), 
and we maintain $x^{k+1}_j = x^k_j$ for all $j\not=i_k$.
The initial point $x^0$ can be chosen arbitrarily. The block $i_0$ can be chosen either deterministically or randomly.
The following diagram illustrates the influential relations of $x^0$ and the random variable sequences $(i_k)_{k\ge 0}$ and $(x^k)_{k\ge 1}$:
\begin{equation}\label{tutu}
    \CD
  @.       i_0 @>  >> i_1 @> >> i_2 @> >> i_3 @> >> \ldots\\
  @.       @V  VV @V  VV @V  VV @V  VV @.   \\
  x^0 @> >> x^1 @> >> x^2 @> >> x^3 @> >> x^4 @>>>\ldots
    \endCD
\end{equation}

To our best knowledge, \eqref{MC-BCD} did not appear before and, as explained above, is not a special case of existing BCD analyses. When the Markov chain $(i_{k})_{k\ge 0}$ has a finite mixing time and problem \eqref{model} has a lower bounded objective, we show that using $\gamma\in(0,2/L)$ ensures $\EE\|\nabla f(x^k)\|\to 0$. The concept of mixing time is reviewed in the next section. In addition, when $f$ is convex and coercive, we show that $\EE f(x^k) \to \min f$ at the rate of $O(1/k)$ with a hidden constant related to the mixing time. 
Note that running the algorithm itself requires no knowledge about the mixing time of the chain. Furthermore, when $f$ is (restricted) strongly convex, then the rate is improved to be linear, unsurprisingly. Although we do not develop any Nesterov-kind acceleration in this paper, a heavy-ball-kind inertial MC-BCD is presented and analyzed because the additional work is quite small. When the computation $\nabla_{i_k} f(x^{k})$ is noisy, as long as the noise is square summable (which is weaker than being summable), MC-BCD still converges.

\subsection{Possible future work}
We mention some future improvements of MC-BCD, 
which will require significantly more work to achieve. First, 
it is possible to accelerate MC-BCD 
using both Nesterov-kind momentum and optimizing the transition probability. 
Second, it is important to parallelize MC-BCD, for example, to allow multiple random walks to simultaneously update different blocks~\cite{RichtarikTakac2016_parallel,FercoqRichtarik2015_accelerated}, even in an asynchronous fashion like \cite{liu2015asynchronous,PengXuYanYin2016_convergence,sun2017asynchronous}. 
Third, it is interesting to develop a primal-dual type MC-BCD, which would apply to a model-free DMDP along a single trajectory. Yet another line of work applies block coordinate update to linear and nonlinear fixed-point problems \cite{peng2016arock,PengWuXuYanYin2016_coordinate,chow2017cyclic} because it can solve optimization problems in imaging and conic programming, which are equipped with nonsmooth, nonseparable objectives, and constraints.

\section{Preliminaries}
\subsection{Markov chain}
We recall some definitions and propertiesof the 
Markov chain that we use in this paper. 
\begin{definition}[finite-state (time-homogeneous) Markov chain]
A stochastic process $X_1,X_2,...$ in a finite state space $[N]:=\{1,2,\ldots,N\}$ is called  Markov chain with transition matrices $(P(k))_{k\geq 0}$ if, for $k\in \mathbb{N}$, $i,j\in [N]$, and  $i_0,i_1,\ldots,i_{k-1}\in [N]$, we have
\begin{equation}
    \mathbb{P}(X_{k+1}=j\mid X_0=i_0,X_1=i_1,\dots,X_k=i)=\mathbb{P}(X_{k+1}=j\mid X_k=i)=P_{i,j}(k).
\end{equation}
The chain is time-homogeneous if $P(k)\equiv P$ for some constant matrix $P$.
\end{definition}
Let the probability distribution of $X_k$ be denoted as the row vector $\pi^k=(\pi^k_1,\pi^k_2,\ldots,\pi^k_N)$, that is, $\PP(X_k=j)=\pi_j^k$. Each $\pi^k$ satisfies  $\sum_{i=1}^N \pi^k_i=1.$ Obviously, it holds $\pi^{k+1}=\pi^k P(k)$.
When the Markov chain is time-homogeneous, we have $\pi^k=\pi^{k-1} P$ and
$\pi^k=\pi^{k-1} P=\cdots=\pi^0 P^{k}$,
for $k\in \mathbb{N}$, where $P^{k}$ is the $k$th power of $P$.
\begin{definition}
A time-homogeneous Markov chain is irreducible if, for any $i,j\in [N]$, there exists $k$ such that $(P^k)_{i,j}>0$.
State $i\in[N]$ is said to have a period $d$ if $P^k_{i,i} = 0$ whenever $k$ is \emph{not} a
multiple of $d$ and $d$ is the greatest such integer. If $d=1$, then we say state $i$ is aperiodic. If  every state is aperiodic, the Markov chain is said to be aperiodic.
\end{definition}

Any time-homogeneous, irreducible, and aperiodic
Markov chain has a stationary distribution $\pi^*=\lim_k \pi^k
=[\pi^*_1,\pi^*_2,\ldots,\pi^*_N]$ with $\sum_{i=1}^N \pi^*_i=1$ and $\min_i\{\pi^*_i\}>0$, and $\pi^*= \pi^* P$. This is a sufficient but not necessary condition to have such $\pi^*$. If the Markov fails to be time-homogeneous\footnote{The time-homogeneous, irreducible, and aperiodic
Markov chain is widely used; however, in practical problems, the Markov chain may not satisfy the time-homogeneous assumption.
For example, in a mobile, if the network connectivity structure is changing all the time, then the set of the neighbors of an agent is time-varying \cite{johansson2007simple}.}, it may still have a stationary distribution under additional assumptions.


In this paper, we make the following assumption, which always holds for time-homogeneous, irreducible, and aperiodic Markov chain and may hold for more general Markov chains.

\begin{assumption}\label{ass:mc}{The Markov chain $(X_k)_{k \ge 0}$ has the transition matrices $(P(k))_{k \ge 0}$ and the stationary distribution $\pi^*$. Define
\begin{align*}
    \Phi(m,n):=P(m)P(m+1)\cdots P(m+n),\quad m,n\geq 0,\qquad
\Pi^*:=\begin{bmatrix}
                    \pi^* \\
                    \pi^* \\
                    \vdots \\
                    \pi^*
              \end{bmatrix}\in\mathbb{R}^{N\times N},
\end{align*}
that is, every row of $\Pi^*$ is $\pi^*$. For each $\epsilon >0$, there exists $\uptau_\epsilon\ge 1$ such that
\begin{equation}\label{convermatrix-g}
    \|\Phi(m,n)-\Pi^*\|_2< \epsilon,\quad \text{for all } m\ge 0, n\ge \uptau_\epsilon-1.
\end{equation}}
\end{assumption}
Here, $\uptau$ is called a \emph{mixing time}, which specifies how long a Markov chain evolves close to its stationary distribution. The literature has a thorough investigation of various kinds of mixing times \cite{bradley2005basic}.
Previous mixing time focuses on bounding the difference between $\pi^k$ and the stationary distribution $\pi^*$. Our version is just easier to use in the analysis.


For a time-homogeneous, irreducible, and aperiodic Markov chain with the transition matrix $P$, $\Phi(m,n)=P^{n+1}$. It is easy to have $\uptau_\epsilon$ as $(1+\frac{3\ln N}{2\ln\frac{1}{\lambda_2(P)}})\cdot\log_{\frac{1}{\lambda_2(P)}}(\frac{1}{\epsilon})$, where $\lambda_2(P)$ denotes the second largest eigenvalue of $P$ (positive and smaller than 1) \cite{meyn2012markov}.
Besides the time-homogeneous, irreducible, and aperiodic Markov chain, some other non-time-homogeneous chains can also have  a geometrically-convergent $\Phi(m,n)$. An example is presented in \cite{ram2009incremental}.

\subsection{Notation and constants}
The following notation is used throughout this paper:
\begin{align}\label{eq:def-delta}
\Delta^k:=x^{k+1}-x^k
\end{align}
In MC-BCD iteration, only the block $\Delta^k_{i_k}$ of $\Delta^k$ is nonzero; other blocks are zero.
Let $\pi^*_{\min}$ be the minimal stationary distribution, i.e.,
\begin{equation}\label{pim}
    \pi^*_{\min}:=\min_{1\leq i\leq N}\{\pi_i^{*}\}.
\end{equation}
For any closed proper function $f$, $\argmin f$ denotes  the set $\{x\in \mathbb{R}^{N}\mid f(x)=\min f\}$, and $\|\cdot\|$ denotes the $\ell_2$ norm. Through the proofs, we use the following sigma algebra
$$\chi^k:=\sigma(x^1,x^2,\ldots,x^k,i_0,i_1,\ldots,i_{k-1}).$$

Let Assumption \ref{ass:mc} hold. In our proofs, we let $\uptau$ be the $\frac{\pi^*_{\min}}{2}$-mixing time, i.e.,
\begin{align}\label{mixt1}
    \|\Phi(m,n)-\Pi^*\|_2\leq\frac{\pi^*_{\min}}{2},~~\text{whenever}~~n\geq \uptau-1.
\end{align}
With  direct calculations,
\begin{equation}\label{th1-t0}
    \frac{\pi^*_{\min}}{2}\leq [\Phi(m,n)]_{i,j},\textrm{~for~any~}i,j\in\{1,2,\ldots,N\}, n\geq \uptau-1.
\end{equation}
If the Markov chain promises  a geometric rate, then we have
 \begin{equation}\label{y-jianduan}
    \uptau=O\left(\ln\frac{2}{\pi^*_{\min}}\right).
\end{equation}
It is worth mentioning that, for a complete graph where all nodes are connected to each other, we have a Markov chain with $\uptau=1$, and our MC-BCD will reduce to random BCD~\cite{nesterov2012efficiency}.

\section{Markov chain block coordinate gradient descent}\label{sec:mc-bcd}
In this section, we study the convergence properties of the MC-BCD for problem (\ref{model}). The discussion covers both convex and nonconvex cases. We show that the MC-BCD can converge if the stepsize $\gamma$ is taken as the same as that in traditional BCD. For convex problems, sublinear convergence rate is established, and for strongly convex cases, linear convergence is shown.

Our analysis is conducted to an inexact version of the MC-BCD, which allows error in computing partial gradients: 
\begin{equation}\label{inCD}
 x^{k+1}_{j} = \left\{\begin{array}{ll}
 x^k_{j} - \gamma\big( \nabla_{j} f(x^{k})+\epsilon^k\big),&\text{ if }j=i_k\\[0.2cm]
 x^k_j, & \text{ if }j\neq i_k,
 \end{array}
 \right.
\end{equation}
where $i_k$ is sampled in the same way as in \eqref{SP}, and $\epsilon^k$ denotes the error in the $k$th iteration. If $\epsilon^k$ vanishes, the above updates reduce to the MC-BCD in \eqref{MC-BCD}.
\subsection{Convergence analysis}
%
The results in this section applies to both convex and nonconvex cases, and they rely on the following assumption.
\begin{assumption} \label{ass:2}
The set of minimizers of function $f$ is nonempty, and $\nabla_i f$ is Lipschitz continuous about $x_i$ with constant $L> 0$ for each $i=1,2,\dots,N$, namely,
\begin{equation}\label{eq:lip-i}\|\nabla_i f(x) - \nabla_i f(x+\alpha e_i)\|\le L\|\alpha\|,\quad \forall x\in\RR^N, \forall \alpha\in\RR,
\end{equation}
where $e_i$ denotes the $i$th standard basis vector in $\RR^N$. In addition, $\nabla f$ is also Lipschitz continuous about $x$ with constant $L_r$, namely,
\begin{equation}\label{eq:lip-f}\|\nabla f(x) - \nabla f(x+s)\|\le L_r\|s\|,\quad  \forall x\in\RR^N, \forall s\in\RR^N.
\end{equation}
We call $\kappa=\frac{L_r}{L}$  the condition number.
\end{assumption}
When \eqref{eq:lip-i} holds for each $i$, we have
\begin{equation}\label{eq:lip-ineq}
f(x+de_i)\le f(x)+\langle \nabla_i f(x), de_i\rangle+\frac{L}{2}\|d\|^2.
\end{equation}

Lemma \ref{lem:bd-Delta} below is very standard. It bounds the square summation of $\Delta^k$ by initial objective error and iteration errors. Lemmas \ref{lem:bd-gik} and \ref{lem:cbd-gik} are new; they study the bounds on $\|\nabla_{i_k}f(x^{k-\uptau + 1})\|^2$ because the sampling bias prevents us from directly bounding $\|\nabla_{i_k}f(x^{k})\|^2$. The bounds in these three lemmas are combined in Theorem \ref{th1} to get the convergence rates of $\|\nabla f(x^k)\|$.
\begin{lemma}\label{lem:bd-Delta}
Under Assumption \ref{ass:2}, let $(x^k)_{k\geq 0}$ be generated by the inexact MC-BCD \eqref{inCD} with any constant stepsize $0<\gamma<\frac{2}{L}$. Then for any $k$,
\begin{equation}\label{eq:sum-dx}
\sum_{t=0}^k\|\Delta^t\|^2\leq\frac{4\gamma}{2-L\gamma}\cdot \big(f(x^0)-\min f\big)+\frac{4\gamma^2}{(2-L\gamma)^2}\sum_{t=0}^{k}\|\epsilon^t\|^2.
\end{equation}
\end{lemma}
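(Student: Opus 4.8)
The plan is to run a standard sufficient-decrease argument on the inexact iteration \eqref{inCD} and then telescope. First I would fix an iteration $k$. Since the update \eqref{inCD} modifies only the $i_k$th block, with $\Delta^k_{i_k}=-\gamma(\nabla_{i_k}f(x^k)+\epsilon^k)$ and $\|\Delta^k\|^2=\|\Delta^k_{i_k}\|^2$, I would apply the blockwise descent inequality \eqref{eq:lip-ineq} at $x=x^k$ along the coordinate direction to obtain
$$f(x^{k+1})\le f(x^k)+\langle\nabla_{i_k}f(x^k),\Delta^k_{i_k}\rangle+\frac{L}{2}\|\Delta^k\|^2.$$

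Next I would eliminate the gradient using the update rule itself. From $\Delta^k_{i_k}=-\gamma(\nabla_{i_k}f(x^k)+\epsilon^k)$ we have $\nabla_{i_k}f(x^k)=-\frac{1}{\gamma}\Delta^k_{i_k}-\epsilon^k$, so the inner product becomes $-\frac{1}{\gamma}\|\Delta^k\|^2-\langle\epsilon^k,\Delta^k_{i_k}\rangle$. Substituting and collecting terms gives
$$f(x^{k+1})\le f(x^k)-\Big(\frac{1}{\gamma}-\frac{L}{2}\Big)\|\Delta^k\|^2-\langle\epsilon^k,\Delta^k_{i_k}\rangle,$$
where the coefficient $c:=\frac{1}{\gamma}-\frac{L}{2}=\frac{2-L\gamma}{2\gamma}$ is strictly positive precisely because $\gamma<2/L$.

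The only nonroutine point is absorbing the error cross term, and the trick is to weight Young's inequality by exactly $c$. Bounding $-\langle\epsilon^k,\Delta^k_{i_k}\rangle\le\frac{c}{2}\|\Delta^k\|^2+\frac{1}{2c}\|\epsilon^k\|^2$ leaves precisely half of the decrease coefficient intact:
$$\frac{c}{2}\|\Delta^k\|^2\le f(x^k)-f(x^{k+1})+\frac{1}{2c}\|\epsilon^k\|^2.$$
Summing this telescoping bound over $t=0,\dots,k$ and using $f(x^{k+1})\ge\min f$ yields $\frac{c}{2}\sum_{t=0}^k\|\Delta^t\|^2\le f(x^0)-\min f+\frac{1}{2c}\sum_{t=0}^k\|\epsilon^t\|^2$. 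Multiplying by $2/c$ and substituting $c=\frac{2-L\gamma}{2\gamma}$, so that $2/c=\frac{4\gamma}{2-L\gamma}$ and $1/c^2=\frac{4\gamma^2}{(2-L\gamma)^2}$, reproduces \eqref{eq:sum-dx} exactly.

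I expect no genuine obstacle here: neither the Markov-chain structure nor the mixing time enters, and the estimate holds pathwise for every realization of $(i_k)$, making it a purely deterministic descent bound. The only thing to get right is the weight $c$ in Young's inequality, which is dictated by the requirement that the residual $\|\Delta^k\|^2$ coefficient stay positive and that the constants telescope into the stated form. This is exactly why this lemma is the easy workhorse, while the real difficulty, the sampling bias flagged in the introduction, is deferred to Lemmas \ref{lem:bd-gik} and \ref{lem:cbd-gik}.
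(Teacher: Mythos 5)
Your proof is correct and follows essentially the same route as the paper: the blockwise descent inequality, elimination of the gradient via the update rule, Young's inequality with weight $c=\frac{1}{\gamma}-\frac{L}{2}$ (which matches the paper's coefficients $\frac{2-L\gamma}{4\gamma}$ and $\frac{\gamma}{2-L\gamma}$ exactly), and a telescoping sum. Your observation that the bound is pathwise and deterministic, with the Markov-chain structure deferred to later lemmas, is also consistent with how the paper positions this lemma.
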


\begin{proof}
~
Recalling the definition of $\Delta^k$ in \eqref{eq:def-delta} and noting $x^{k+1}_j=x_j^k$ for all $j\neq i_k$, we have:
\begin{align}\label{in-le2-temp-1}
    \langle \Delta^{k},\nabla f(x^k)\rangle=\left\langle x^{k+1}_{i_k}-x^{k}_{i_k},\nabla_{i_k} f(x^k)\right\rangle=-\frac{1}{\gamma}\|\Delta^k\|^2+\left\langle\epsilon^k, x^{k}_{i_k}-x^{k+1}_{i_k}\right\rangle,
\end{align}
where we have used the update rule in \eqref{inCD} to obtain the second equality.
By \eqref{eq:lip-ineq} and \eqref{in-le2-temp-1}, it holds that
\begin{align}
    f(x^{k+1})&\leq f(x^k)+\langle  \Delta^k, \nabla f(x^k)\rangle+\frac{L}{2}\|\Delta^k\|^2\nonumber\\
    &=f(x^k)+ \big(\frac{L}{2}-\frac{1}{\gamma}\big)\|\Delta^k\|^2+\langle\epsilon^k, x^{k}_{i_k}-x^{k+1}_{i_k}\rangle\label{in-le2-temp-2.1}\\
    &\overset{a)}{\leq} f(x^k)+\big(\frac{L}{4}-\frac{1}{2\gamma}\big)\|\Delta^k\|^2+\frac{\gamma\|\epsilon^k\|^2}{2-L\gamma},\label{in-le2-temp-2}
\end{align}
where $a)$ is from the Young's inequality
$\langle \epsilon^k, x^{k}_{i_k}-x^{k+1}_{i_k}\rangle\leq \frac{\gamma}{2-L\gamma}\|\epsilon^k\|^2+\frac{2-L\gamma}{4\gamma}\|\Delta^k\|^2$.
Summing \eqref{in-le2-temp-2}, rearranging terms, and noting $f(x^k)\ge \min f,\forall k$, we obtain the desired result and complete the proof.
\end{proof}

Also, we can bound partial gradient by the iterate change $\Delta^k$ and error term $\epsilon^k$ as follows.

\begin{lemma}\label{lem:bd-gik}
Assume \eqref{eq:lip-f}. Let $(x^k)_{k\geq 0}$ be generated by the inexact MC-BCD \eqref{inCD}. Then for $k\geq \uptau$, it holds
\begin{align}\label{yan}
    \|\nabla_{i_k}f(x^{k-\uptau+1})\|^2\leq 2L_r^2\cdot(\uptau-1)\cdot\sum_{d=k-\uptau+1}^{k-1}\|\Delta^d\|^2+\frac{4}{\gamma^2}\|\Delta^k\|^2+4\|\epsilon^k\|^2.
\end{align}
\end{lemma}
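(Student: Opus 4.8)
The plan is to relate the partial gradient $\nabla_{i_k}f(x^{k-\uptau+1})$ at the \emph{lagged} iterate back to the partial gradient $\nabla_{i_k}f(x^k)$ at the \emph{current} iterate, since the latter is directly expressible through the update rule \eqref{inCD}. Rearranging \eqref{inCD} on the active block gives $\nabla_{i_k}f(x^k) = -\frac{1}{\gamma}\Delta^k_{i_k} - \epsilon^k$, and because only the $i_k$-th block of $\Delta^k$ is nonzero we have $\|\Delta^k_{i_k}\| = \|\Delta^k\|$. As the surrounding text emphasizes, the sampling bias forbids bounding $\|\nabla_{i_k}f(x^k)\|^2$ usefully in expectation; the point of this lemma is instead to produce a \emph{deterministic} bound on the lagged quantity, paying a controlled price for moving across the $\uptau-1$ intervening steps.

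First I would split via the triangle inequality
$$\|\nabla_{i_k}f(x^{k-\uptau+1})\| \le \|\nabla_{i_k}f(x^{k-\uptau+1}) - \nabla_{i_k}f(x^k)\| + \|\nabla_{i_k}f(x^k)\|.$$
For the first term I would use that the $i_k$-block norm is dominated by the full norm, combined with the global Lipschitz bound \eqref{eq:lip-f}, to get $\|\nabla_{i_k}f(x^{k-\uptau+1}) - \nabla_{i_k}f(x^k)\| \le L_r\|x^{k-\uptau+1}-x^k\|$. Writing the displacement as a telescoping sum $x^k - x^{k-\uptau+1} = \sum_{d=k-\uptau+1}^{k-1}\Delta^d$ and applying the triangle inequality followed by Cauchy--Schwarz over its $\uptau-1$ summands yields $\|x^{k-\uptau+1}-x^k\|^2 \le (\uptau-1)\sum_{d=k-\uptau+1}^{k-1}\|\Delta^d\|^2$; this is precisely where the factor $(\uptau-1)$ in the statement originates.

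For the second term I would substitute the rearranged update rule and apply $\|a+b\|^2 \le 2\|a\|^2 + 2\|b\|^2$ to obtain $\|\nabla_{i_k}f(x^k)\|^2 \le \frac{2}{\gamma^2}\|\Delta^k\|^2 + 2\|\epsilon^k\|^2$. Finally I would combine the two pieces with the same elementary inequality applied to the triangle-inequality split, namely $\|\nabla_{i_k}f(x^{k-\uptau+1})\|^2 \le 2\|\nabla_{i_k}f(x^{k-\uptau+1}) - \nabla_{i_k}f(x^k)\|^2 + 2\|\nabla_{i_k}f(x^k)\|^2$, which matches the three terms on the right-hand side of \eqref{yan} with the stated constants $2L_r^2(\uptau-1)$, $4/\gamma^2$, and $4$. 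I do not foresee a genuine obstacle here: every step is an application of the triangle, Cauchy--Schwarz, or $\|a+b\|^2\le 2\|a\|^2+2\|b\|^2$ inequality together with the closed form of the update. The only point needing care is the bookkeeping of the summation range---the $\uptau-1$ terms $d=k-\uptau+1,\dots,k-1$---so that the count feeding the Cauchy--Schwarz step is exactly right and reproduces the claimed constant.
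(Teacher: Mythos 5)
Your proposal is correct and follows essentially the same route as the paper's proof: both rearrange the update to express $\nabla_{i_k}f(x^k)$ via $\Delta^k$ and $\epsilon^k$, split with $\|a+b\|^2\le 2\|a\|^2+2\|b\|^2$, dominate the block gradient difference by the full gradient difference, and telescope over the $\uptau-1$ intermediate steps with Cauchy--Schwarz. The only cosmetic difference is that you apply the Lipschitz bound \eqref{eq:lip-f} before Cauchy--Schwarz on the displacement, whereas the paper applies Cauchy--Schwarz to the gradient increments first and Lipschitz term by term; the constants come out identically.
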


\begin{proof}
~
By the update rule in \eqref{inCD} and the definition of $\Delta^k$, we have $-\nabla_{i_k}f(x^{k})=\frac{\Delta^k_{i_k}}{\gamma}+\epsilon^k$.
Applying the triangle inequality to the above inequality yields
\begin{align}\|\nabla_{i_k}f(x^{k-\uptau+1})\|^2
\leq & ~ 2\|\nabla_{i_k}f(x^{k-\uptau+1})-\nabla_{i_k}f(x^{k})\|^2+2\left\|\frac{\Delta^k_{i_k}}{\gamma}+\epsilon^k\right\|^2\label{eq:bdpd-step1}\\
\leq & ~ 2\|\nabla_{i_k}f(x^{k-\uptau+1})-\nabla_{i_k}f(x^{k})\|^2+\frac{4}{\gamma^2}\|\Delta^k\|^2+4\|\epsilon^k\|^2.\nonumber
\end{align}
Note $\|\nabla_{i_k}f(x^{k-\uptau+1})-\nabla_{i_k}f(x^{k})\|^2\le \|\nabla f(x^{k-\uptau+1})-\nabla f(x^{k})\|^2$. Hence, it follows from the triangle inequality and the Lipschitz continuity of $\nabla f$ in \eqref{eq:lip-f} that
\begin{align*}
    \|\nabla_{i_k}f(x^{k-\uptau+1})\|^2&\leq 2\|\nabla f(x^{k-\uptau+1})-\nabla f(x^{k})\|^2+\frac{4}{\gamma^2}\|\Delta^k\|^2+4\|\epsilon^k\|^2\nonumber\\
    &\leq 2\cdot(\uptau-1)\cdot\sum_{d=k-\uptau+1}^{k-1}\|\nabla f(x^{d+1})-\nabla f(x^d)\|^2+\frac{4}{\gamma^2}\|\Delta^k\|^2+4\|\epsilon^k\|^2\nonumber\\
    &\leq 2L_r^2\cdot(\uptau-1)\cdot\sum_{d=k-\uptau+1}^{k-1}\|\Delta^d\|^2+\frac{4}{\gamma^2}\|\Delta^k\|^2+4\|\epsilon^k\|^2,
\end{align*}
which gives the desired result.
\end{proof}

\begin{remark}
If $\epsilon^k=0,\,\forall k$, then starting from \eqref{eq:bdpd-step1} and by the same arguments, we can have
\begin{align}\label{yan-noise0}
    \|\nabla_{i_k}f(x^{k-\uptau+1})\|^2\leq 2L_r^2\cdot(\uptau-1)\cdot\sum_{d=k-\uptau+1}^{k-1}\|\Delta^d\|^2+\frac{2}{\gamma^2}\|\Delta^k\|^2.
\end{align}
\end{remark}
Furthermore, we can lower bound full gradient by conditional partial gradient.

\begin{lemma}\label{lem:cbd-gik}
Let \eqref{mixt1} hold. Then it holds
\begin{align}\label{th1-t1}
    \EE\big(\|\nabla_{i_k}f(x^{k-\uptau+1})\|^2\mid\chi^{k-\uptau+1}\big){\geq} \frac{\pi^*_{\min}}{2}\|\nabla f(x^{k-\uptau+1})\|^2.
\end{align}
\end{lemma}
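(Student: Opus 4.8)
The plan is to evaluate the conditional expectation directly, exploiting the fact that, once we condition on $\chi^{k-\uptau+1}=\sigma(x^1,\ldots,x^{k-\uptau+1},i_0,\ldots,i_{k-\uptau})$, the only remaining randomness in $\|\nabla_{i_k}f(x^{k-\uptau+1})\|^2$ is the index $i_k$. First I would note that $x^{k-\uptau+1}$ is $\chi^{k-\uptau+1}$-measurable, so the vector $\nabla f(x^{k-\uptau+1})$ and each of its blocks $\nabla_j f(x^{k-\uptau+1})$ are deterministic given this sigma-algebra. Writing the conditional expectation as a sum over the possible values of $i_k$ then gives, for $k\ge\uptau$,
\begin{equation*}
\EE\big(\|\nabla_{i_k}f(x^{k-\uptau+1})\|^2\mid\chi^{k-\uptau+1}\big)=\sum_{j=1}^N \PP\big(i_k=j\mid\chi^{k-\uptau+1}\big)\,\|\nabla_j f(x^{k-\uptau+1})\|^2 .
\end{equation*}

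Next I would identify the conditional law of $i_k$. Since the evolution of the chain from time $k-\uptau$ onward depends on the past only through the state $i_{k-\uptau}$, and $i_{k-\uptau}$ is $\chi^{k-\uptau+1}$-measurable, the Markov property reduces the conditioning to $\PP(i_k=j\mid\chi^{k-\uptau+1})=\PP(i_k=j\mid i_{k-\uptau})$. Marginalizing over the intermediate (unobserved) states $i_{k-\uptau+1},\ldots,i_{k-1}$ identifies this with the corresponding entry of the product of the $\uptau$ transition matrices $P(k-\uptau),\ldots,P(k-1)$; with the bookkeeping convention $[\Phi(m,n)]_{i,j}=\PP(X_{m+n+1}=j\mid X_m=i)$ this is exactly $\PP(i_k=j\mid i_{k-\uptau}=i)=[\Phi(k-\uptau,\uptau-1)]_{i,j}$, taking $m=k-\uptau$ and $n=\uptau-1$.

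The decisive step is then the uniform lower bound supplied by \eqref{th1-t0}: because $n=\uptau-1$ meets the hypothesis $n\ge\uptau-1$, every entry obeys $[\Phi(k-\uptau,\uptau-1)]_{i,j}\ge\frac{\pi^*_{\min}}{2}$. Substituting this floor for each transition probability and factoring it out of the sum yields
\begin{equation*}
\EE\big(\|\nabla_{i_k}f(x^{k-\uptau+1})\|^2\mid\chi^{k-\uptau+1}\big)\ge\frac{\pi^*_{\min}}{2}\sum_{j=1}^N\|\nabla_j f(x^{k-\uptau+1})\|^2=\frac{\pi^*_{\min}}{2}\|\nabla f(x^{k-\uptau+1})\|^2,
\end{equation*}
which is precisely \eqref{th1-t1}.

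I expect the only delicate point to be the Markov reduction in the second step: one must justify carefully that conditioning on the full history $\chi^{k-\uptau+1}$, which carries $x$-information manufactured from past indices and errors, injects no dependence of $i_k$ beyond $i_{k-\uptau}$, so that the product of exactly $\uptau$ transition matrices is genuinely the conditional law of $i_k$. The remainder is routine, since it merely replaces each transition probability by its uniform lower bound $\pi^*_{\min}/2$ before summing.
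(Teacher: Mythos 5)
Your proof is correct and follows essentially the same route as the paper's: expand the conditional expectation as a sum over the possible values of $i_k$, invoke the Markov property to reduce the conditioning to $i_{k-\uptau}$ and identify the conditional law with $[\Phi(k-\uptau,\uptau-1)]_{i_{k-\uptau},j}$, and then apply the entrywise lower bound \eqref{th1-t0} together with $\sum_{j}\|\nabla_j f(\cdot)\|^2=\|\nabla f(\cdot)\|^2$. The only difference is that you spell out the bookkeeping and the measurability justifications more explicitly than the paper does, which is harmless.
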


\begin{proof}
~
Taking conditional expectation, we have
$$\EE\big(\|\nabla_{i_k}f(x^{k-\uptau+1})\|^2\mid\chi^{k-\uptau+1}\big)= \sum_{j=1}^N  \|\nabla_j f(x^{k-\uptau+1})\|^2\cdot\mathbb{P}(i_k=j\mid\chi^{k-\uptau+1}).$$
By the Markov property, it holds $\mathbb{P}(i_k=j\mid\chi^{k-\uptau+1})=\mathbb{P}(i_k=j\mid i_{k-\uptau})=[\Phi(k-\uptau,\uptau-1)]_{i_{k-\uptau},j}$. Then the desired result is obtained from \eqref{th1-t0} and the fact $\sum_{i=1}^N \|\nabla_i f(\cdot)\|^2=\|\nabla f(\cdot)\|^2$.
\end{proof}

\begin{theorem}\label{th1}
Let Assumptions \ref{ass:mc} and \ref{ass:2} hold and $(x^k)_{k\geq 0}$ be generated by the inexact MC-BCD \eqref{inCD} with any constant stepsize $0<\gamma<\frac{2}{L}$.
We have the following results:
\begin{enumerate}
\item \textbf{Square summable noise:}
If the noise sequence satisfy $\sum_{k=0}^\infty\|\epsilon^k\|^2=\mathcal{E}<+\infty$. Then,
\begin{equation}\label{th1-result}
    \lim_{k\to\infty}\EE\|\nabla f(x^k)\|=0,
\end{equation}
and
\begin{equation}\label{th1-result-rate1}
    \EE\left[\min_{1\leq t\leq k}\|\nabla f(x^t)\|^2\right]\leq\frac{2}{(k+1)\pi_{\min}^*}\left[C_1(\uptau)\cdot \big(f(x^0)-\min f\big)+\big(C_2(\uptau)+4\big)\mathcal{E}\right].
\end{equation}


\item \textbf{Non-square-summable noise:} If $\|\epsilon^k\|^2\leq S,\, \forall k\ge0$ for some positive number $S>0$, then
\begin{align}\label{th1-result-rate2}
    \EE\left[\min_{1\leq t\leq k}\|\nabla f(x^{t})\|^2\right]\leq\frac{2}{(k+1)\pi_{\min}^*}C_1(\uptau)\cdot \big(f(x^0)-\min f\big)+\frac{2}{\pi_{\min}^*}\left(\frac{C_2(\uptau)(k+\uptau)}{k+1}+4\right)S.
\end{align}
\end{enumerate}
The constants used above are
\begin{equation}\label{eq:C-J-E}
C_1(\uptau): = \frac{4\gamma}{2-L\gamma}\left(2L_r^2(\uptau-1)^2+\frac{4}{\gamma^2}\right),\quad C_2(\uptau): = \frac{4\gamma^2}{(2-L\gamma)^2}\left(2L_r^2(\uptau-1)^2+\frac{4}{\gamma^2}\right).
\end{equation}
\end{theorem}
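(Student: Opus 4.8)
The plan is to chain the three preceding lemmas into a single scalar recursion on $\EE\|\nabla f(x^{k-\uptau+1})\|^2$ and then sum it over $k$. First I would take the full expectation of the conditional bound \eqref{th1-t1} in Lemma \ref{lem:cbd-gik}; since $\|\nabla f(x^{k-\uptau+1})\|^2$ is $\chi^{k-\uptau+1}$-measurable, the tower property yields $\EE\|\nabla_{i_k}f(x^{k-\uptau+1})\|^2\ge \frac{\pi^*_{\min}}{2}\EE\|\nabla f(x^{k-\uptau+1})\|^2$. Combining this lower bound with the upper bound \eqref{yan} of Lemma \ref{lem:bd-gik} (after taking expectations) sandwiches $\EE\|\nabla_{i_k}f(x^{k-\uptau+1})\|^2$ and gives, for every $k\ge\uptau$,
\begin{equation*}
\frac{\pi^*_{\min}}{2}\EE\|\nabla f(x^{k-\uptau+1})\|^2 \le 2L_r^2(\uptau-1)\sum_{d=k-\uptau+1}^{k-1}\EE\|\Delta^d\|^2 + \frac{4}{\gamma^2}\EE\|\Delta^k\|^2 + 4\EE\|\epsilon^k\|^2.
\end{equation*}

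Second, I would sum this inequality over $k$ from $\uptau$ up to an index $K$; reindexing $t=k-\uptau+1$ turns the left side into $\sum_{t}\EE\|\nabla f(x^t)\|^2$. The key bookkeeping step is the double sum $\sum_{k}\sum_{d=k-\uptau+1}^{k-1}\EE\|\Delta^d\|^2$: swapping the order of summation shows that each $\EE\|\Delta^d\|^2$ is counted at most $\uptau-1$ times, so this term is bounded by $(\uptau-1)\sum_d\EE\|\Delta^d\|^2$, which is exactly where the factor $(\uptau-1)^2$ in the constants \eqref{eq:C-J-E} comes from. Grouping the coefficient of $\sum_d\EE\|\Delta^d\|^2$ as $2L_r^2(\uptau-1)^2+\frac{4}{\gamma^2}$ and invoking the bound \eqref{eq:sum-dx} of Lemma \ref{lem:bd-Delta} produces precisely $C_1(\uptau)\,(f(x^0)-\min f)$ together with a noise term carrying $C_2(\uptau)$, while the leftover $4\sum_k\EE\|\epsilon^k\|^2$ supplies the extra $+4$ multiplying the noise.

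Finally, the two conclusions follow by specializing the noise bound, using in both cases that $\EE[\min_{t}\|\nabla f(x^t)\|^2]$ is at most the running average $\frac{1}{k+1}\sum_{t}\EE\|\nabla f(x^t)\|^2$ over the summed indices. For square-summable noise the whole right-hand side is bounded uniformly in $K$ by $C_1(\uptau)(f(x^0)-\min f)+(C_2(\uptau)+4)\mathcal{E}$, so $\sum_{t\ge1}\EE\|\nabla f(x^t)\|^2<\infty$; a convergent nonnegative series has vanishing terms, giving $\EE\|\nabla f(x^t)\|^2\to0$ and hence $\EE\|\nabla f(x^t)\|\to0$ by Jensen's inequality, and the averaging bound yields the rate \eqref{th1-result-rate1}. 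For merely bounded noise I would instead substitute $\|\epsilon^k\|^2\le S$, so the $C_2$ noise sum runs over all indices up to $K$ (producing the $k+\uptau$ factor) while the separate $4\sum_k$ term contributes over the averaged indices, yielding \eqref{th1-result-rate2}. I expect the main obstacle to be the careful index accounting in this last step, matching the windowed $\Delta^d$ sum, the objective-decrease sum, and the two distinct noise sums to the exact coefficients $\tfrac{k+\uptau}{k+1}$ and the denominator $k+1$, rather than any conceptual difficulty, since the analytic heart is the sandwich of the two gradient lemmas.
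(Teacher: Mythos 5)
Your proposal is correct and follows essentially the same architecture as the paper's proof: take expectations in Lemma \ref{lem:cbd-gik} via the tower property, sandwich against the expectation of \eqref{yan}, sum over $k$ with the multiplicity count $\sum_{t}\sum_{d=t-\uptau+1}^{t-1}\|\Delta^d\|^2\le(\uptau-1)\sum_d\|\Delta^d\|^2$ to produce the $(\uptau-1)^2$ in \eqref{eq:C-J-E}, invoke Lemma \ref{lem:bd-Delta}, and finish with the min-versus-average bound \eqref{eq:bd-gd}; your index accounting for the $\frac{k+\uptau}{k+1}$ factor in the bounded-noise case matches the paper's shift $k\mapsto k+\uptau-1$. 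The one place you deviate is the limit statement \eqref{th1-result}: the paper argues pathwise that $\Delta^k\to\mathbf{0}$ and $\epsilon^k\to\mathbf{0}$ force $\|\nabla_{i_k}f(x^{k-\uptau+1})\|^2\to 0$ and then passes to expectations by dominated convergence, whereas you deduce $\EE\|\nabla f(x^t)\|^2\to 0$ directly from the summability of the series $\sum_t\EE\|\nabla f(x^t)\|^2$ established for the rate; your route is a touch cleaner since it reuses the bound you already have and sidesteps exhibiting a dominating integrable function, but it is a minor variation rather than a different proof.
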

\begin{proof}
%
%
~
In the case of square summable noise, we have $\epsilon^k\to \textbf{0}$ as $k\to\infty$. In addition, it follows from \eqref{eq:sum-dx} that $\sum_{k=0}^\infty\|\Delta^k\|^2<+\infty$ and thus $\Delta^k\to \textbf{0}$ as $k\to\infty$. Hence, \eqref{yan} implies
\begin{align}\label{yanling}
    \lim_{k\to\infty}\|\nabla_{i_k}f(x^{k-\uptau+1})\|^2=0.
\end{align}
Taking expectation on \eqref{yanling} and using the Lebesgue dominated convergence theorem, we have $$\lim_{k\to\infty}\EE\|\nabla_{i_k}f(x^{k-\uptau+1})\|^2=0.$$
Hence from \eqref{th1-t1}, it follows that
$$\lim_{k\to\infty}\EE\|\nabla f(x^{k})\|^2=\lim_{k\to\infty}\EE\|\nabla f(x^{k-\uptau+1})\|^2\leq\frac{2}{\pi^*_{\min}}\lim_{k\to\infty} \EE\|\nabla_{i_k}f(x^{k-\uptau+1})\|^2=0,$$
and thus \eqref{th1-result} holds by the Jensen's inequality $(\EE\|\nabla f(x^k)\|)^2\leq\EE\|\nabla f(x^k)\|^2$.

Note $\sum_{t=\uptau-1}^{k}\sum_{d=t-\uptau+1}^{t-1}\|\Delta^d\|^2\le (\uptau-1)\sum_{d=0}^{k-1}\|\Delta^d\|^2$ for any $k\geq\uptau$. Therefore, summing both sides of \eqref{yan} yields
\begin{align}
\sum_{t=\uptau-1 }^k\|\nabla_{i_t}f(x^{t-\uptau+1})\|^2 \leq & 2L_r^2(\uptau-1)^2\sum_{d=0}^{k-1}\|\Delta^d\|^2+\frac{4}{\gamma^2}\sum_{t=\uptau-1 }^k\|\Delta^t\|^2+4\sum_{t=\uptau-1 }^k\|\epsilon^t\|^2\nonumber\\
\leq & \left(2L_r^2(\uptau-1)^2+\frac{4}{\gamma^2}\right)\sum_{t=0}^k\|\Delta^t\|^2+4\sum_{t=\uptau-1 }^k\|\epsilon^t\|^2.\label{eq:bd-pg2}
\end{align}
The inequality in \eqref{eq:bd-pg2} together with \eqref{eq:sum-dx} and the assumption on $\epsilon^k$ gives
\begin{align}\label{jiahe-1}
\sum_{t=\uptau-1 }^\infty\|\nabla_{i_t}f(x^{t-\uptau+1})\|^2\le C_1(\uptau)\cdot \big(f(x^0)-\min f\big)+\big(C_2(\uptau)+4\big)\mathcal{E},
\end{align}
where $C_1(\uptau)$ and $C_2(\uptau)$ are defined in \eqref{eq:C-J-E}.
In addition, we have
\begin{equation}\label{eq:bd-gd}
(k+1)\cdot\EE\left[\min_{0\leq t\leq k}\|\nabla f(x^{t})\|^2\right]\leq\sum_{t=0}^k \EE\|\nabla f(x^{t})\|^2=\sum_{t=\uptau-1}^{k+\uptau-1} \EE\|\nabla f(x^{t-\uptau+1})\|^2\le \frac{2}{\pi_{\min}^*}\sum_{t=\uptau-1}^{k+\uptau-1}\EE\|\nabla_{i_t}f(x^{t-\uptau+1})\|^2,
\end{equation}
where the last inequality follows from \eqref{th1-t1}. Now the result in \eqref{th1-result-rate1} is obtained from the above inequality together with that in \eqref{jiahe-1}.


In the case of $\|\epsilon^k\|^2\leq S,\, \forall k\ge0$, we have from \eqref{eq:sum-dx} and \eqref{eq:bd-pg2} that
\begin{equation*}
\sum_{t=\uptau-1 }^k\|\nabla_{i_t}f(x^{t-\uptau+1})\|^2 \le \left(2L_r^2(\uptau-1)^2+\frac{4}{\gamma^2}\right)\left(\frac{4\gamma}{2-L\gamma}\cdot \big(f(x^0)-\min f\big)+\frac{4\gamma^2(k+1)S}{(2-L\gamma)^2}\right) + 4(k-\uptau+2)S.
\end{equation*}
In the above inequality, setting $k$ to $k+\uptau-1$ and using \eqref{eq:bd-gd} give
the result in \eqref{th1-result-rate2}.
\end{proof}

Although MC-BCD has sample bias, we can still use a constant stepsize. In fact, Theorem \ref{th1} indicates the stepsize can be as large as traditional BCD. The assumption on the noise sequence is weaker than the commonly found assumption $\sum_{k}\|\epsilon_k\|<+\infty$.
When the noise sequence is non-diminishing, we have a final error that approximately matches the noise level. This is useful in an application in Sec. 5.3, where computing $\nabla_{i_k} f$ may involve certain sampling that becomes too expensive to require asymptotically vanishing noise.  

\subsection{Convergence rates for convex minimization}
When $f$ is convex, we can estimate the rates of expected objective error. 
We let
\begin{equation}\label{ft}
    F_t:=\EE f(x^{t\cdot\uptau})- \min f\quad\text{and}\quad \overline{x} =\mathrm{Proj}_{\argmin f}(x).
\end{equation}
First, we present an important technical lemma, which will be used to derive both sublinear and linear convergence results.
\begin{lemma}\label{lemsub}
Let $(x^k)_{k\geq 0}$ be generated by MC-BCD \eqref{CD} with  $0<\gamma<\frac{2}{L}$. When $f$  is convex, we have
\begin{equation}\label{lemsub-t6}
    F_t^2\leq C_{\uptau}\cdot(F_t-F_{t+1})\cdot\EE\|x^{t\cdot\uptau}-\overline{x^{t\cdot\uptau}}\|^2,
\end{equation}
where the constant is
\begin{equation}\label{aboutc}
    C_{\uptau}:=\frac{\max\{4L_r^2\cdot(\uptau-1),\frac{4}{\gamma^2}\}}{(\frac{1}{\gamma}-\frac{L}{2})\cdot\pi^*_{\min}}.
\end{equation}
\end{lemma}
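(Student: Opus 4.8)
The plan is to chain three block-level estimates over a window of $\uptau$ consecutive iterations and then invoke convexity. Write $s=t\uptau$ for the anchor index, so the block runs over $d=s,s+1,\dots,s+\uptau-1$ and terminates at $x^{s+\uptau}=x^{(t+1)\uptau}$. First I would establish a per-block \emph{sufficient decrease}: setting $\epsilon^k=0$ in the intermediate inequality \eqref{in-le2-temp-2.1} gives $f(x^{k})-f(x^{k+1})\ge(\frac1\gamma-\frac{L}{2})\|\Delta^k\|^2$, where $\frac1\gamma-\frac L2>0$ because $\gamma<2/L$. Telescoping across the block and taking expectations yields
\[
F_t-F_{t+1}\ \ge\ \Big(\tfrac1\gamma-\tfrac{L}{2}\Big)\,\EE\sum_{d=s}^{s+\uptau-1}\|\Delta^d\|^2 .
\]

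Second, I would lower bound the progress by the squared full gradient at the anchor $x^{s}$. The subtlety is that the mixing estimate \eqref{th1-t1} controls $\|\nabla_{i_k}f(x^{k-\uptau+1})\|^2$ at the \emph{lagged} point $x^{k-\uptau+1}$, not at $x^k$; so I align the block by choosing $k=s+\uptau-1=(t+1)\uptau-1$, which makes $k-\uptau+1=s$ exactly. Applying \eqref{th1-t1} and taking total expectation gives $\EE\|\nabla_{i_{k}}f(x^{s})\|^2\ge\frac{\pi^*_{\min}}{2}\EE\|\nabla f(x^{s})\|^2$. On the other hand, the noise-free bound \eqref{yan-noise0} gives $\|\nabla_{i_k}f(x^s)\|^2\le 2L_r^2(\uptau-1)\sum_{d=s}^{s+\uptau-2}\|\Delta^d\|^2+\frac{2}{\gamma^2}\|\Delta^{s+\uptau-1}\|^2$. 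Combining the two and bounding the right-hand side by the common coefficient $\max\{2L_r^2(\uptau-1),\frac{2}{\gamma^2}\}$ times $\sum_{d=s}^{s+\uptau-1}\|\Delta^d\|^2$, the factor of $2$ coming from dividing by $\pi^*_{\min}/2$ promotes the coefficient to $\max\{4L_r^2(\uptau-1),\frac4{\gamma^2}\}$, giving
\[
\EE\|\nabla f(x^{s})\|^2\ \le\ \frac{\max\{4L_r^2(\uptau-1),\frac{4}{\gamma^2}\}}{\pi^*_{\min}}\,\EE\sum_{d=s}^{s+\uptau-1}\|\Delta^d\|^2 .
\]

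Feeding the sufficient-decrease bound into the sum on the right then collapses the two displays into $\EE\|\nabla f(x^{s})\|^2\le C_{\uptau}\,(F_t-F_{t+1})$, with $C_\uptau$ exactly as in \eqref{aboutc}. Finally I would close the argument by convexity: since $f(x^{s})-\min f\le\langle\nabla f(x^s),x^s-\overline{x^s}\rangle\le\|\nabla f(x^s)\|\,\|x^s-\overline{x^s}\|$, taking expectations and applying Cauchy--Schwarz on the expectation gives $F_t\le\sqrt{\EE\|\nabla f(x^s)\|^2}\,\sqrt{\EE\|x^s-\overline{x^s}\|^2}$; squaring and substituting the previous bound (with $s=t\uptau$) produces the claimed inequality. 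The main obstacle is the second step: obtaining the gradient at the \emph{current} anchor $x^{t\uptau}$ rather than at a lagged iterate forces the precise alignment $k=(t+1)\uptau-1$, so that the mixing lemma and the telescoped iterate-difference sum refer to the same $\uptau$-block. The Cauchy--Schwarz passage from a product of norms to a product of expectations — which is what converts $F_t$ into the squared form $F_t^2$ multiplied by the separated distance factor $\EE\|x^{t\uptau}-\overline{x^{t\uptau}}\|^2$ — is the other place demanding care.
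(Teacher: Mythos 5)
Your proposal is correct and follows essentially the same route as the paper's proof: it combines the noise-free bound \eqref{yan-noise0} with the mixing estimate \eqref{th1-t1} at the aligned index $k=(t+1)\uptau-1$, uses the sufficient decrease from \eqref{in-le2-temp-2.1} to convert $\sum_d\EE\|\Delta^d\|^2$ into $F_t-F_{t+1}$, and closes with convexity plus Cauchy--Schwarz on the expectation. The only (immaterial) difference is that you telescope the decrease first and then feed in the sum, whereas the paper substitutes the per-term bound on $\EE\|\Delta^d\|^2$ directly.
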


\begin{proof}
~
Since $\epsilon^k=0,\,\forall k$, taking expectations of both sides of (\ref{yan-noise0}) and using (\ref{th1-t1}) yield
\begin{eqnarray}\label{lemsub-t1-0}
    \EE\|\nabla f(x^{k-\uptau+1})\|^2\leq \frac{\max\left\{4L_r^2\cdot(\uptau-1),\frac{4}{\gamma^2}\right\}}{\pi^*_{\min}}\cdot\sum_{d=k-\uptau+1}^{k}\EE\|\Delta^d\|^2,
\end{eqnarray}
For each $d$, we have from (\ref{in-le2-temp-2.1}) with $\epsilon^k=0$ that
\begin{eqnarray}\label{lemsub-t2-0}
    \EE\|\Delta^d\|^2\leq\frac{\EE f(x^d)-\EE f(x^{d+1})}{\frac{1}{\gamma}-\frac{L}{2}}.
\end{eqnarray}
Substituting (\ref{lemsub-t2-0}) into (\ref{lemsub-t1-0}) and recalling the definition of $C_{\uptau}$ in \eqref{aboutc} give
\begin{eqnarray}\label{lemsub-t2}
    \EE\|\nabla f(x^{k-\uptau+1})\|^2\leq C_{\uptau}\left[\EE f(x^{k-\uptau+1})-\EE f(x^{k+1})\right].
\end{eqnarray}
For any integer $t$, letting $k=(t+1)\cdot\uptau-1$ in (\ref{lemsub-t2}), we have
\begin{eqnarray}\label{lemsub-t3}
    \EE\| \nabla f(x^{t\cdot\uptau})\|^2\leq C_{\uptau}\left[F_t-F_{t+1}\right].
\end{eqnarray}
On the other hand, it follows from convexity of $f$ that
\begin{eqnarray}\label{lemsub-t4}
    F_t=\EE f(x^{t\cdot\uptau})-\min f\leq \EE\left\langle\nabla f(x^{t\cdot\uptau}), x^{t\cdot\uptau}-\overline{x^{t\cdot\uptau}}\right\rangle.
\end{eqnarray}
Now  square both sides of (\ref{lemsub-t4}) and apply the Cauchy-Schwarz inequality to have
\begin{eqnarray}\label{lemsub-t5}
    F_t^2\leq \EE\|\nabla f(x^{t\cdot\uptau})\|^2\cdot\EE\|x^{t\cdot\uptau}-\overline{x^{t\cdot\uptau}}\|^2.
\end{eqnarray}
Substituting (\ref{lemsub-t3}) into (\ref{lemsub-t5}) yields \eqref{lemsub-t6}, and we complete the proof.
\end{proof}

\subsubsection{Sublinear convergence rate}
A well-known result in convergence analysis is that a nonnegative sequence $(a_k)_{k\ge0}$ that obeying $a_{k+1}\leq a_k$ and $a_{k+1}\le a_k-\eta a_k^2$, for some $\eta>0$ and all $k\geq 0$ satisfies
\begin{align}\label{eq:bd-ak}
a_k\le \frac{a_0}{a_0\eta k+1}.
\end{align}
It can be proved by observing
$\frac{1}{a_{k+1}}-\frac{1}{a_{k}} \geq \eta.$
\begin{theorem}\label{th2}
Under Assumptions \ref{ass:mc} and \ref{ass:2}, let $(x^k)_{k\geq 0}$ be generated by  MC-BCD \eqref{CD} with  $0<\gamma<\frac{2}{L}$. Assume that $f$ is convex and the level set $\mathcal{X}_0=\{x\in\RR^N: f(x)\le f(x^0)\}$ is bounded with diameter $R=\max_{x,y\in\mathcal{X}_0}\|x-y\|$. Then we have
\begin{equation}
    \EE f(x^k)-\min f \leq \frac{F_0C_{\uptau}R^2}{F_0 \lfloor \frac{k}{\uptau}\rfloor+C_{\uptau}R^2}, 
\end{equation}
where $C_{\uptau}$ is the constant defined in \eqref{aboutc}, and $\uptau$ is the $\frac{\pi_{\min}^*}{2}$-mixing time defined in \eqref{mixt1}. 
\end{theorem}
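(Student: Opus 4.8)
The plan is to convert the per-epoch inequality of Lemma~\ref{lemsub} into a scalar recursion on $F_t$ and then invoke the elementary bound \eqref{eq:bd-ak}. The first ingredient I need is a uniform bound on the distance factor $\EE\|x^{t\cdot\uptau}-\overline{x^{t\cdot\uptau}}\|^2$. To obtain it, I would first observe that for the exact scheme ($\epsilon^k=0$), equation \eqref{in-le2-temp-2.1} reduces to $f(x^{k+1})=f(x^k)+(\frac{L}{2}-\frac{1}{\gamma})\|\Delta^k\|^2$; since $0<\gamma<\frac{2}{L}$ forces $\frac{L}{2}-\frac{1}{\gamma}<0$, the objective is monotonically nonincreasing along the iterations, so every iterate $x^k$ stays in the level set $\mathcal{X}_0$. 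Because $\overline{x^{t\cdot\uptau}}\in\argmin f$ and $\min f\le f(x^0)$, the projection also lies in $\mathcal{X}_0$; hence both $x^{t\cdot\uptau}$ and $\overline{x^{t\cdot\uptau}}$ belong to $\mathcal{X}_0$ and their distance is at most the diameter $R$. This yields the deterministic bound $\|x^{t\cdot\uptau}-\overline{x^{t\cdot\uptau}}\|^2\le R^2$, and thus $\EE\|x^{t\cdot\uptau}-\overline{x^{t\cdot\uptau}}\|^2\le R^2$.

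Substituting this into \eqref{lemsub-t6} gives $F_t^2\le C_{\uptau}R^2\,(F_t-F_{t+1})$, which I rearrange into $F_{t+1}\le F_t-\frac{1}{C_{\uptau}R^2}F_t^2$. Together with the monotonicity $F_{t+1}\le F_t$, again inherited from the descent of the objective after taking expectations, the sequence $(F_t)_{t\ge0}$ satisfies exactly the hypotheses of \eqref{eq:bd-ak} with $\eta=\frac{1}{C_{\uptau}R^2}$. Applying that bound produces $F_t\le \frac{F_0}{F_0 t/(C_{\uptau}R^2)+1}=\frac{F_0 C_{\uptau}R^2}{F_0 t+C_{\uptau}R^2}$.

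Finally I translate the epoch-indexed estimate back to the iteration index. Setting $t=\lfloor k/\uptau\rfloor$, so that $t\cdot\uptau\le k$, and using once more that $(\EE f(x^k))_{k\ge0}$ is nonincreasing, I get $\EE f(x^k)-\min f\le \EE f(x^{t\cdot\uptau})-\min f=F_{\lfloor k/\uptau\rfloor}$, which combined with the previous display yields the claimed rate. The only genuinely delicate point is the very first step: certifying that the iterates, and the relevant projections onto $\argmin f$, remain inside the bounded level set, so that the distance-to-solution factor in Lemma~\ref{lemsub} can be safely replaced by the constant $R^2$. Everything after that is a mechanical specialization of the scalar recursion \eqref{eq:bd-ak} together with the $\lfloor k/\uptau\rfloor$ bookkeeping.
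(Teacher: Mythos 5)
Your proof is correct and follows essentially the same route as the paper's: bound the distance factor by $R^2$ via the monotone descent of $f(x^k)$ (so all iterates stay in $\mathcal{X}_0$), substitute into Lemma~\ref{lemsub} to obtain the scalar recursion $F_{t+1}\le F_t-\frac{F_t^2}{C_{\uptau}R^2}$, and conclude via \eqref{eq:bd-ak} together with the $\lfloor k/\uptau\rfloor$ bookkeeping. One trivial remark: \eqref{in-le2-temp-2.1} is an inequality rather than an equality, but your monotonicity conclusion and the rest of the argument are unaffected.
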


\begin{proof}
~
From \eqref{in-le2-temp-2.1} with $\epsilon^k=0,\,\forall k$ and $0<\gamma<\frac{2}{L}$, it follows that  $f(x^k)$ is monotonically nonincreasing about $k$, and thus $x^k\in\mathcal{X}_0$ for all $k$. Therefore, $\|x^{t\cdot\uptau}-\overline{x^{t\cdot\uptau}}\|^2\le R^2, \,\forall t$.
Substituting this inequality into \eqref{lemsub-t6} gives
$    F_t^2\leq C_{\uptau}R^2\cdot(F_t-F_{t+1})$,
or equivalently $F_{t+1}\le F_t-\frac{F_t^2}{C_{\uptau}R^2}$. 
From \eqref{eq:bd-ak} we obtain
$$F_t\le \frac{F_0}{\frac{F_0 t}{C_{\uptau}R^2}+1},\,\forall t\ge0.$$
Since $f(x^k)$ is nonincreasing about $k$, it follws that 
\begin{equation*}
    \EE f(x^k)-\min f\leq F_{\lfloor \frac{k}{\uptau}\rfloor}\leq \frac{F_0}{\frac{F_0 \lfloor \frac{k}{\uptau}\rfloor}{C_{\uptau}R^2}+1}=\frac{F_0C_{\uptau}R^2}{F_0 \lfloor \frac{k}{\uptau}\rfloor+C_{\uptau}R^2},
\end{equation*}
which completes the proof.
\end{proof}

\begin{remark}
We consider a standard stepsize  $\gamma=\frac{1}{L}$ and compare random BCD and MC-BCD.
In [Theorem 1, \cite{nesterov2012efficiency}], it is shown that random BCD has the rate $
    \EE f(x^k)-\min f=O(\frac{N\cdot R^2\cdot L}{ k}).$
We stress that, with our notation, $\nabla f$ is $(N\cdot L)$-Lipschitz in the worst case. When our Markov chain uses a complete graph, we can have a uniform stationary distribution and $\uptau=1$. In this case, MC-BCD reduces to random BCD, and our complexity of MC-BCD is also
$
\EE f(x^k)-\min f=O( \frac{N\cdot R^2\cdot L}{ k}).
$
In this sense, we have generalized random BCD with a matching complexity. If the Markov chain promises a geometric mixing rate, i.e., $\uptau=O(\ln N)$, then our convergence rate result becomes $
    \EE f(x^k)-\min f=O(\frac{ N\cdot \ln^2 N\cdot R^2\cdot L}{k})$.
While in cyclic BCD, we have $f(x^k)-\min f=O(\frac{N^2\cdot R^2\cdot L}{ k})$ from  \cite[Corollary 3.8]{BeckTetruashvili2013_convergence}.\footnote{The authors in \cite[Corollary 3.8]{BeckTetruashvili2013_convergence} presents this results in the perspective of epochs, while here we present the rate in the perspective of iterations. Thus, their result is multiplied by $N$ for comparison.}
That is,, in terms of worst-case guarantee,  MC-BCD performs slightly worse than i.i.d. random BCD but better than cyclic BCD.
\end{remark}

\subsubsection{Linear convergence rate.}

To have linear convergence, we consider the restricted $\nu$-strongly convex function:
\begin{equation}\label{qg}
    f(x)-\min f\geq\nu\|x-\overline{x}\|^2,\quad \text{for all}~x\in\RR^N,~\overline{x} =\mathrm{Proj}_{\argmin f}(x).
\end{equation}
\begin{theorem}\label{th-li}
Under Assumptions \ref{ass:mc} and \ref{ass:2}, let $(x^k)_{k\geq 0}$ be generated by  MC-BCD \eqref{CD} with  $0<\gamma<\frac{2}{L}$.
If $f$ satisfies condition (\ref{qg}), then
\begin{equation}
    \EE f(x^k)-\min f \leq F_0\left(1-\frac{\nu}{C_{\uptau}}\right)^{\lfloor \frac{k}{\uptau}\rfloor}. 
\end{equation}
\end{theorem}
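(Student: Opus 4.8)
The plan is to combine the technical Lemma \ref{lemsub} with the restricted strong convexity condition \eqref{qg} so as to collapse the quadratic recursion on $F_t$ into a geometric contraction. Lemma \ref{lemsub} already supplies the inequality $F_t^2\leq C_{\uptau}(F_t-F_{t+1})\,\EE\|x^{t\cdot\uptau}-\overline{x^{t\cdot\uptau}}\|^2$, so the only remaining ingredient is a bound on the distance-to-minimizer factor, which is exactly what \eqref{qg} provides.

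First I would evaluate \eqref{qg} at $x=x^{t\cdot\uptau}$ and take expectations, giving $F_t=\EE f(x^{t\cdot\uptau})-\min f\geq \nu\,\EE\|x^{t\cdot\uptau}-\overline{x^{t\cdot\uptau}}\|^2$, hence $\EE\|x^{t\cdot\uptau}-\overline{x^{t\cdot\uptau}}\|^2\leq F_t/\nu$. Substituting this into Lemma \ref{lemsub} yields $F_t^2\leq \frac{C_{\uptau}}{\nu}(F_t-F_{t+1})F_t$. Cancelling one factor of $F_t$ (the case $F_t=0$ being trivial, since the descent inequality \eqref{in-le2-temp-2.1} and $F_{t+1}\geq 0$ then force $F_{t+1}=0$, and by induction $F_s=0$ for all $s\geq t$) and rearranging gives the one-step contraction $F_{t+1}\leq\left(1-\frac{\nu}{C_{\uptau}}\right)F_t$.

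Iterating this recursion from $t=0$ produces $F_t\leq\left(1-\frac{\nu}{C_{\uptau}}\right)^t F_0$. To pass from the subsampled quantities $F_t$ back to $\EE f(x^k)-\min f$ for arbitrary $k$, I would reuse the monotonicity argument already invoked in Theorem \ref{th2}: with $\epsilon^k=0$ and $0<\gamma<\frac{2}{L}$, the coefficient $\frac{L}{2}-\frac{1}{\gamma}$ in \eqref{in-le2-temp-2.1} is negative, so $f(x^k)$ is nonincreasing in $k$, whence $\EE f(x^k)-\min f\leq F_{\lfloor k/\uptau\rfloor}\leq F_0\left(1-\frac{\nu}{C_{\uptau}}\right)^{\lfloor k/\uptau\rfloor}$, which is the claimed bound.

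The argument is essentially mechanical once Lemma \ref{lemsub} is available, so I do not anticipate a serious obstacle; the hard work of handling the sampling bias through the mixing time is already encapsulated there. The one point meriting a line of care is the legitimacy of the contraction factor: since $F_{t+1}\geq 0$ always holds, the inequality $\left(1-\frac{\nu}{C_{\uptau}}\right)F_t\geq F_{t+1}\geq 0$ automatically forces $1-\frac{\nu}{C_{\uptau}}\in[0,1)$ whenever $F_t>0$, so the geometric estimate is well-posed without assuming any explicit relationship between $\nu$ and $C_{\uptau}$.
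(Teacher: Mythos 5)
Your proposal is correct and follows essentially the same route as the paper: bound $\EE\|x^{t\cdot\uptau}-\overline{x^{t\cdot\uptau}}\|^2$ by $F_t/\nu$ via \eqref{qg}, plug into Lemma \ref{lemsub} to get the contraction $F_{t+1}\le(1-\frac{\nu}{C_{\uptau}})F_t$, iterate, and finish with monotonicity of $f(x^k)$. Your extra care about the $F_t=0$ case and the sign of the contraction factor is a small refinement the paper leaves implicit.
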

\begin{proof}
~
Immediately from \eqref{qg}, we have the bound
\begin{equation}\label{th-li-t1}
    \EE\|x^{t\cdot\uptau}-\overline{x^{t\cdot\uptau}}\|^2\leq\frac{\EE f(x^{t\cdot\uptau})-\min f}{\nu}=\frac{F_{t}}{\nu}.
\end{equation}
Substituting the above inequality into \eqref{lemsub-t6} yields
 $   F_t^2\leq \frac{C_{\uptau}}{\nu}\cdot(F_t-F_{t+1})\cdot F_t$,
or equivalently $F_{t+1}\le (1-\frac{\nu}{C_{\uptau}})F_t.$
Hence,
$$F_t\le F_0\left(1-\frac{\nu}{C_{\uptau}}\right)^t ,\,\forall t\ge0.$$
Again from monotonicity of $f(x^k)$ about $k$, it follows that
\begin{equation}
    \EE f(x^k)-\min f \leq F_{\lfloor \frac{k}{\uptau}\rfloor} \le F_0\left(1-\frac{\nu}{C_{\uptau}}\right)^{\lfloor \frac{k}{\uptau}\rfloor},
\end{equation}
which completes the proof.
\end{proof}

\begin{remark}
If we consider the stepsize $\gamma=\frac{1}{L}$ and assume the Markov chain enjoys a uniform stationary distribution, then we get the rate
$
\EE(f(x^k)-\min f)=O\left(\left(1-\frac{\nu}{N\cdot\max\{8\kappa L\cdot(\uptau-1),8L\}}\right)^{\lfloor \frac{k}{\uptau}\rfloor}\right).
$
\end{remark}

\section{Extension to nonsmooth problems}
All results established in previous sections assume the smoothness of the objective function. In this section, we add separable, possibly nonsmooth functions to the objective: 
\begin{equation}\label{model-com}
    \Min~~F(x)\equiv f(x_1,x_2,\ldots,x_N)+\sum_{i=1}^N g_i(x_i).
 \end{equation}
Here, $f:\mathbb{R}^N\mapsto \mathbb{R}$ is a differentiable function, $\nabla_i f$ is Lipschitz continuous for each $i=1,2,\dots,N$, and $g_i:\mathbb{R}\mapsto \mathbb{R}$ is a closed proper function. Note that we do not assume convexity on either $f$ or $g_i$'s. Toward finding a solution to \eqref{model-com}, we propose the inexact Markov chain  proximal block coordinate descent (iMC-PBCD).

 Given a graph $\mathcal{G}=(\mathcal{V},\mathcal{E})$, the iMC-PBCD iteratively performs:
\begin{subequations}\label{MC-PBCD}
\begin{align}
 \text{sample}~& i_{k}\in\{j:(i_{k-1},j)\in \mathcal{E}\} \sim P_{i_{k-1},j}(k),\\
\label{ProxCD}
\text{compute}~& x^{k+1}_{j} =\left\{
\begin{array}{ll}\prox_{\gamma g_j}\left( x^k_{j} - \gamma \big(\nabla_{j} f(x^{k})+\epsilon^k\big)\right), &\text{ if }j=i_k,\\[0.1cm]
x_j^k,&\text{ if }j\neq i_k.
\end{array}
\right.
\end{align}
\end{subequations}
In the above update, $\gamma$ is a step size,
$\epsilon^k$ denotes the error in evaluating the partial gradient, and $\prox_\psi(y)$ is the proximal mapping of a closed function $\psi$ at $y$, defined as
$$\prox_\psi(y)\in\argmin_x \left\{\psi(x)+\frac{1}{2}\|x-y\|^2\right\}.$$ 

To characterize the property of a solution, we employ the notion of subdifferential \cite[Definition 8.3]{rockafellar2009variational}. 
\begin{definition}[subdifferential] Let  $J: \mathbb{R}^N \rightarrow (-\infty, +\infty]$ be a proper and lower semicontinuous function.
\begin{enumerate}
  \item For any $x\in \textrm{dom} (J)$, the Fr$\acute{e}$chet subdifferential of $J$ at $x$, denoted as $\hat{\partial}J (x)$, is the set of all vectors $u\in \mathbb{R}^N$ that satisfies
  $$\lim_{y\neq x}\inf_{y\rightarrow x}\frac{J(y)-J(x)-\langle u, y-x\rangle}{\|y-x\|}\geq 0.$$
If $x\notin \textrm{dom} (J)$, then $\hat{\partial}J(x)=\emptyset$.

\item The limiting subdifferential, or simply the subdifferential, of $J$ at $x\in \textrm{dom}(J)$, denoted as $\partial J(x)$, is defined as 
\begin{align}
\partial J(x):=\{u\in\mathbb{R}^N: \exists~(x^k)_{k\geq 0} ~\text{and}~ u^k\in \hat{\partial}J(x^k)\text{ such that }J(x^k)\rightarrow J(x)\text{ and }u^k\rightarrow u~\emph{as}~k\rightarrow \infty\}\nonumber.
\end{align}
\end{enumerate}
\end{definition}
%
%
%
The first-order optimality condition for $x$ to be a solution of \eqref{model-com} is
\begin{align}\label{critical}
    \textbf{0}\in \partial F(x).
\end{align}
Any such point is called a critical point of $F$.

{The proofs below are quite different from previous ones because we cannot bound the gradient with $\|\Delta^k\|$ any more, i.e., the core relation \eqref{yan} fails to hold. Consequently, the convergence result in this section is new. Also, we cannot specify the convergence rates yet.}
\begin{lemma}
Under Assumption \ref{ass:2}, let $(x^k)_{k\geq 0}$ be generated by iMC-PBCD \eqref{MC-PBCD} with  $0<\gamma<\frac{1}{L}$. If $\sum_{k=0}^\infty\|\epsilon^k\|^2<\infty$, then
\begin{align}\label{bounded-descent-temp-4}
\lim_{k\to\infty}\Delta^k=\textbf{0},
\end{align}
where $\Delta^k$ is defined in \eqref{eq:def-delta}.
\end{lemma}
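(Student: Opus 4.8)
The plan is to establish a deterministic, pathwise sufficient-decrease inequality for the composite objective $F$ and then sum it. It is worth noting up front that, unlike the theorems of Section \ref{sec:mc-bcd}, this statement carries no expectation and requires no mixing-time estimate: whichever block $i_k$ happens to be activated, the proximal step produces a one-step decrease of $F$, so the Markov-chain selection rule is irrelevant to this lemma. Thus I would argue entirely along a fixed sample path.

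First I would exploit the variational characterization of the proximal step. Writing $y:=x^k_{i_k}-\gamma(\nabla_{i_k}f(x^k)+\epsilon^k)$, the update \eqref{ProxCD} means $x^{k+1}_{i_k}$ is a minimizer of $u\mapsto g_{i_k}(u)+\frac{1}{2\gamma}\|u-y\|^2$. Comparing its value against the feasible point $u=x^k_{i_k}$ and expanding the squares (this one-sided comparison is valid even though $g_{i_k}$ may be nonconvex, and it is the only property of $\prox$ I can use) yields
\begin{equation*}
g_{i_k}(x^{k+1}_{i_k})-g_{i_k}(x^k_{i_k})\le -\frac{1}{2\gamma}\|\Delta^k\|^2-\langle \Delta^k_{i_k},\nabla_{i_k}f(x^k)\rangle-\langle \Delta^k_{i_k},\epsilon^k\rangle .
\end{equation*}
Adding the block descent inequality \eqref{eq:lip-ineq} for $f$ along the $i_k$-th block, and using that only block $i_k$ changes so $\sum_j g_j(x^{k+1}_j)-\sum_j g_j(x^k_j)=g_{i_k}(x^{k+1}_{i_k})-g_{i_k}(x^k_{i_k})$, the two $\langle \Delta^k_{i_k},\nabla_{i_k}f(x^k)\rangle$ terms cancel and I obtain
\begin{equation*}
F(x^{k+1})-F(x^k)\le -\Big(\tfrac{1}{2\gamma}-\tfrac{L}{2}\Big)\|\Delta^k\|^2-\langle \Delta^k_{i_k},\epsilon^k\rangle .
\end{equation*}

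Next I would absorb the error term by Young's inequality, $-\langle \Delta^k_{i_k},\epsilon^k\rangle\le \frac{c}{2}\|\Delta^k\|^2+\frac{1}{2c}\|\epsilon^k\|^2$ with $c:=\frac{1}{2\gamma}-\frac{L}{2}$, which is strictly positive precisely because $\gamma<\frac{1}{L}$; this is exactly why the stepsize bound tightens from $2/L$ in Lemma \ref{lem:bd-Delta} to $1/L$ here. What remains is $F(x^{k+1})-F(x^k)\le -\frac{c}{2}\|\Delta^k\|^2+\frac{1}{2c}\|\epsilon^k\|^2$. Summing from $0$ to $K$, telescoping the $F$ terms, and invoking $\sum_k\|\epsilon^k\|^2<\infty$ together with $F$ bounded below gives $\frac{c}{2}\sum_{k=0}^\infty\|\Delta^k\|^2\le F(x^0)-\inf F+\frac{1}{2c}\sum_k\|\epsilon^k\|^2<\infty$, whence $\Delta^k\to\textbf{0}$.

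The main obstacle is not the algebra but securing the two structural ingredients that replace the now-defunct relation \eqref{yan}. The first is that the $\prox$ step still furnishes a usable descent estimate when $g_{i_k}$ is nonconvex; I resolve this by reading $\prox$ purely as an $\argmin$ and comparing against the previous iterate, never invoking nonexpansiveness or single-valuedness. The second is that $F$ must be bounded below for the telescoped sum to be finite; this is implicit in model \eqref{model-com} admitting a solution, and I would state it as a standing hypothesis for this section. Finally, I note that \emph{square}-summability of $(\epsilon^k)$ suffices here, rather than summability, because the error enters the decrease estimate quadratically after Young's inequality.
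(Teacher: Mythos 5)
Your proposal is correct and follows essentially the same route as the paper: the one-sided comparison of the proximal subproblem's objective at $x^{k+1}_{i_k}$ versus $x^k_{i_k}$, combined with the block descent inequality \eqref{eq:lip-ineq}, Young's inequality on the error term (with the very same constant $\frac{1}{4\gamma}-\frac{L}{4}$), and a telescoping sum using lower boundedness of $F$ and square-summability of the noise. Your side remark that lower boundedness of $F$ deserves to be an explicit hypothesis is a fair observation, but it does not change the argument.
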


\begin{proof}
By the definition of the proximal mapping, the update in \eqref{ProxCD} can be equivalently written as
\begin{equation}\label{eq:equiv-x-update}
x^{k+1}_{i_k}\in \argmin_{x_{i_k}}\left\{ \left\langle x_{i_k}-x_{i_k}^k, \big(\nabla_{i_k} f(x^{k})+\epsilon^k\big)\right\rangle + \frac{1}{2\gamma}\|x_{i_k}-x_{i_k}^k\|^2+g_{i_k}(x_{i_k})\right\}.
\end{equation}
Therefore,
\begin{align}\label{variat}
 \left\langle x^{k+1}_{i_k}-x^{k}_{i_k}, \nabla_{i_k} f(x^{k})+\epsilon^k\right\rangle+\frac{1}{2\gamma}\|x^{k+1}_{i_k}-x^k_{i_k}\|^2+g_{i_k}(x^{k+1}_{i_k})\leq g_{i_k}(x^{k}_{i_k}).
\end{align}
By the Young's inequality and the definition of $\Delta^k$ in \eqref{eq:def-delta}, it holds that
$$\left\langle x^{k+1}_{i_k}-x^{k}_{i_k},\epsilon^k\right\rangle\leq \frac{1}{4}\left(\frac{1}{\gamma}-L\right)\|\Delta^k\|^2+\frac{\|\epsilon^k\|^2}{\frac{1}{\gamma}-L}.$$
In addition, it follows from \eqref{eq:lip-ineq} that
$$f(x^{k+1})\le f(x^k)+\big\langle \Delta^k,\nabla f(x^k)\big\rangle+\frac{L}{2}\|\Delta^k\|^2.$$
Adding the above two inequalities into \eqref{variat} and recalling the definition of $\Delta^k$ in \eqref{eq:def-delta} give
\begin{align*}
f(x^{k+1})+g_{i_k}(x^{k+1}_{i_k})+\frac{1}{2\gamma}\|\Delta^k\|^2
\leq  f(x^k)+g_{i_k}(x^{k}_{i_k})+\frac{1}{4}\left(\frac{1}{\gamma}-L\right)\|\Delta^k\|^2+\frac{\|\epsilon^k\|^2}{\frac{1}{\gamma}-L}+\frac{L}{2}\|\Delta^k\|^2.
\end{align*}
Rearranging terms of the above inequality and noting $g_j(x_j^{k+1})=g_j(x_j^k)$ for all $j\neq i_k$, we have
\begin{align*}
    F(x^{k+1})+\left(\frac{1}{4\gamma}-\frac{L}{4}\right)\|\Delta^k\|^2\leq F(x^k)+\frac{\|\epsilon^k\|^2}{\frac{1}{\gamma}-L},
\end{align*}
or equivalently
\begin{align}\label{bounded-descent-temp-2}
\frac{1}{4}\left(\frac{1}{\gamma}-L\right)\|\Delta^k\|^2\le F(x^k) - F(x^{k+1})+\frac{\|\epsilon^k\|^2}{\frac{1}{\gamma}-L}.
\end{align}
Summing up the above inequality over $k$, using the conditions $0<\gamma<\frac{1}{L}$ and $\sum_{k=0}^\infty\|\epsilon^k\|^2<\infty$, and also noting $F$ is lower bounded yield $\sum_{k=0}^\infty\|\Delta^k\|^2<\infty$, which implies \eqref{bounded-descent-temp-4} and completes the proof.
\end{proof}


\begin{theorem}
Under Assumptions \ref{ass:mc} and \ref{ass:2}, let $(x^k)_{k\geq 0}$ be generated by iMC-PBCD \eqref{MC-PBCD} with  $0<\gamma<\frac{1}{L}$. If $\sum_{k=0}^\infty\|\epsilon^k\|^2<\infty$, then any cluster point  of $(x^k)_{k\geq 0}$ is a critical point of $F$ almost surely. 
\end{theorem}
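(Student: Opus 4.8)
The plan is to fix an arbitrary block $j\in[N]$ and prove that, almost surely, every cluster point $x^*$ of $(x^k)_{k\ge0}$ satisfies the blockwise relation $-\nabla_j f(x^*)\in\partial g_j(x^*_j)$; since $\sum_i g_i$ is separable and $f$ is $C^1$, we have $\partial F(x^*)=\nabla f(x^*)+\partial g_1(x^*_1)\times\cdots\times\partial g_N(x^*_N)$, so collecting these $N$ inclusions gives $\mathbf{0}\in\partial F(x^*)$, which is exactly \eqref{critical}. Two ingredients are already in hand: the preceding lemma gives $\Delta^k\to\mathbf{0}$ a.s. (through $\sum_k\|\Delta^k\|^2<\infty$), and the exact-minimizer characterization \eqref{eq:equiv-x-update} furnishes, whenever $i_k=j$, the Fr\'echet inclusion $u^k:=-\nabla_j f(x^k)-\epsilon^k-\tfrac{1}{\gamma}\Delta^k_j\in\hat\partial g_j(x^{k+1}_j)$. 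Because $\Delta^k\to\mathbf{0}$, $\epsilon^k\to\mathbf{0}$ and $\nabla_j f$ is continuous, along any stream of selection times of $j$ at which $x^k\to x^*$ we automatically get $u^k\to-\nabla_j f(x^*)$.

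The first and hardest task is to manufacture such a stream of selection times. The difficulty is that a cluster point is approached along a subsequence $x^{k_n}\to x^*$ that need not meet the (random) times when $j$ is chosen, and the sampling bias can skip $j$ for long stretches; moreover $x^*$ is itself path-dependent. To couple visits to $j$ with nearness to $x^*$, I would use the mixing lower bound \eqref{th1-t0}, which, exactly as in the proof of Lemma \ref{lem:cbd-gik}, gives $\PP(i_{k+\uptau-1}=j\mid\chi^k)\ge\tfrac{\pi^*_{\min}}{2}$. Fixing a countable dense set $\{y_p\}\subset\RR^N$ and radii $\rho_l\downarrow0$, for each triple $(j,p,l)$ set $E_k:=\{\|x^k-y_p\|<\rho_l\}\cap\{i_{k+\uptau-1}=j\}$, so that $\PP(E_k\mid\chi^k)\ge\tfrac{\pi^*_{\min}}{2}\,\mathbf{1}_{\{\|x^k-y_p\|<\rho_l\}}$. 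On the event that $y_p$ is approached within $\rho_l$ infinitely often, $\sum_k\PP(E_k\mid\chi^k)=\infty$, and the conditional (second) Borel--Cantelli lemma forces $E_k$ to occur infinitely often almost surely. Intersecting over the countably many $(j,p,l)$ leaves a full-measure event on which, given any realized cluster point $x^*$, choosing $y_{p(l)}$ within $\rho_l/2$ of $x^*$ extracts times $m_l:=k_l+\uptau-1$ with $i_{m_l}=j$ and $x^{m_l}\to x^*$, whence also $x^{m_l+1}\to x^*$ since $\Delta^k\to\mathbf{0}$.

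Next I would supply the value convergence that the limiting subdifferential requires. Comparing the exact minimizer $x^{m_l+1}_j$ in \eqref{eq:equiv-x-update} against the competitor $x^*_j$ yields
\begin{align*}
&\langle x^{m_l+1}_j-x^{m_l}_j,\nabla_j f(x^{m_l})+\epsilon^{m_l}\rangle+\tfrac{1}{2\gamma}\|x^{m_l+1}_j-x^{m_l}_j\|^2+g_j(x^{m_l+1}_j)\\
&\qquad\le \langle x^*_j-x^{m_l}_j,\nabla_j f(x^{m_l})+\epsilon^{m_l}\rangle+\tfrac{1}{2\gamma}\|x^*_j-x^{m_l}_j\|^2+g_j(x^*_j).
\end{align*}
Letting $l\to\infty$, every inner-product and quadratic term vanishes, so $\limsup_l g_j(x^{m_l+1}_j)\le g_j(x^*_j)$; combined with lower semicontinuity of $g_j$ this forces $g_j(x^{m_l+1}_j)\to g_j(x^*_j)$. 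Now $u^{m_l}\in\hat\partial g_j(x^{m_l+1}_j)$ with $x^{m_l+1}_j\to x^*_j$, $g_j(x^{m_l+1}_j)\to g_j(x^*_j)$ and $u^{m_l}\to-\nabla_j f(x^*)$, so the very definition of the limiting subdifferential gives $-\nabla_j f(x^*)\in\partial g_j(x^*_j)$. Assembling over all $j\in[N]$ completes the proof.

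The main obstacle is squarely the coupling in the second paragraph: upgrading ``block $j$ is visited infinitely often'' to ``block $j$ is visited immediately after the iterate is close to $x^*$,'' all while $x^*$ is random. The countable-dense-set-plus-conditional-Borel--Cantelli device is what makes the heuristic ``infinitely many fresh chances of probability $\ge\tfrac{\pi^*_{\min}}{2}$ to hit $j$ near $x^*$'' rigorous and measurable; everything else (the descent estimate behind $\Delta^k\to\mathbf{0}$, the prox comparison, and the closedness already built into the definition of $\partial g_j$) is routine. I would also note that if $(x^k)$ has no cluster point the statement is vacuous, so boundedness need not be assumed.
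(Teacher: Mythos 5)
Your proposal is correct and follows the same skeleton as the paper's proof: extract the Fr\'echet subgradient inclusion from the optimality condition of the proximal step, use $\Delta^k\to\mathbf{0}$ and $\epsilon^k\to\mathbf{0}$ to drive that subgradient element to $-\nabla_j f(x^*)$, and use the mixing bound \eqref{th1-t0} to show that every block $j$ is selected infinitely often at times when the iterate is near the cluster point. Where you genuinely diverge is in how that last probabilistic claim is made rigorous, and your version is the more careful one. The paper takes the realized convergent subsequence $\mathcal{K}$ (which is random, since both the cluster point and the subsequence depend on the sample path), thins it so consecutive indices are at least $\uptau$ apart, and multiplies the conditional probabilities $\PP(i_{k_t}\neq j\mid i_{k_{t-1}}\neq j)\le 1-\pi^*_{\min}/2$ as if $\mathcal{K}$ were a deterministic index set; your countable-dense-net-plus-conditional-Borel--Cantelli device avoids conditioning on a path-dependent subsequence and is the standard way to make the statement measurable. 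You also supply a step the paper skips: the limiting subdifferential requires $g_j(x^{m_l+1}_j)\to g_j(x^*_j)$ and not merely $x^{m_l+1}\to x^*$, and the paper's appeal to ``outer-continuity of the subdifferential'' silently assumes this value convergence; your prox-comparison against the competitor $x^*_j$ delivers exactly the missing $\limsup$ bound, which together with lower semicontinuity closes the gap. One small technical point to patch in your write-up: L\'evy's extension of Borel--Cantelli requires the event to be measurable one step ahead of the conditioning $\sigma$-algebra, whereas your $E_k$ involves $i_{k+\uptau-1}$ and so lives $\uptau$ steps ahead of $\chi^k$; split the indices into residue classes modulo $\uptau$ (at least one class carries a divergent conditional sum) and apply the lemma along that class. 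With that fix your argument is complete, and arguably tighter than the one in the paper.
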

\begin{proof}
%
%
%
%
%
%
%
%
By the first optimality condition of \eqref{eq:equiv-x-update}, it holds
\begin{align*}
\frac{-\Delta^{k}_{i_k}}{\gamma}-\nabla_{i_k}f(x^k)-\epsilon^k\in \partial g_{i_k}(x^{k+1}_{i_k}),
\end{align*}
or equivalently
\begin{align}\label{bounded-descent-temp-5}
-\frac{\Delta^{k}_{i_k}}{\gamma}+\nabla_{i_k}f(x^{k+1})-\nabla_{i_k}f(x^k)-\epsilon^k\in \nabla_{i_k}f(x^{k+1})+\partial g_{i_k}(x^{k+1}_{i_k})= \partial_{i_k}F(x^{k+1}).
\end{align}
From \eqref{bounded-descent-temp-4} and also the Lipschitz continuity of $\nabla_i f$, we have from \eqref{bounded-descent-temp-5} that
\begin{align}\label{distpar}
    \lim_{k\to\infty}\textrm{dist}\left(0,\partial_{i_k}F(x^{k+1})\right)\leq\lim_{k\to\infty}\left\|-\frac{\Delta^{k}_{i_k}}{\gamma}+\nabla_{i_k}f(x^{k+1})-\nabla_{i_k}f(x^k)-\epsilon^k\right\|=0.
\end{align}

Let $\bar{x}$ be a cluster point of $(x^k)_{k\geq 0}$ and thus there is a subsequence $(x^k)_{k\in\mathcal{K}}\rightarrow \bar{x}$. If necessary, taking a subsubsequence, we can assume $|k_1-k_2|\geq \mathcal{J}$ for any $k_1,k_2\in\mathcal{K}$. We go to prove the following claim:
\begin{equation}\label{aseq}
\text{For any }j\in [N],\text{ there are infinite }k\in\mathcal{K}\text{ such that }
i_{k}=j,\,~a.s.
\end{equation}

If the above claim is not true, then for some $j\in[N]$, with nontrivial probability, there are only finite $k\in\mathcal{K}$ such that $i_k=j$. Dropping these finitely many $k$'s in $\mathcal{K}$, we obtain a new subsequence $\mathcal{\hat{K}}=\{k_1,k_2,\ldots\}$ and $i_k\neq j$ for any $k\in\hat{\mathcal{K}}$. By the Markov property, it holds that for any $m\ge 1$,
 \begin{align}\label{infty-p}
     &\PP(i_{k_1}\neq j, i_{k_2}\neq j, i_{k_3}\neq j,\ldots, i_{k_m}\neq j)\nonumber\\
     =&\PP(i_{k_1}\neq j)\PP(i_{k_2}\neq j\mid i_{k_1}\neq j)\PP(i_{k_3}\neq j\mid i_{k_2}\neq j)\ldots\PP(i_{k_m}\neq j\mid i_{k_{m-1}}\neq j).
 \end{align}
For any $k_{t-1}, k_t\in\hat{\mathcal{K}}$, since $k_t-k_{t-1}\ge\uptau$, then we have from \eqref{th1-t0} that
$\PP(i_{k_t}=j\mid i_{k_{t-1}}\neq j)\ge\frac{\pi_{\min}^*}{2}.$
Hence
\begin{align}\label{cha}
   \PP(i_{k_t}\neq j\mid i_{k_{t-1}}\neq j)=1-  \PP(i_{k_t}=j\mid i_{k_{t-1}}\neq j) \le 1-\frac{\pi_{\min}^*}{2},
\end{align}
and thus it follows from \eqref{infty-p} that
%
  \begin{align}
     \PP(i_{k_1}\neq j, i_{k_2}\neq j, i_{k_3}\neq j,\ldots, i_{k_m}\neq j)\leq \left(1-\frac{\pi_{\min}^*}{2}\right)^{m-1}.
 \end{align}
 Letting $m\to\infty$, we conclude that
 $$\PP\big(\mathcal{K}\text{ only contains finitely many }k\text{ such that }i_k=j\big)=0,$$
 and thus the claim in \eqref{aseq} is true.

Now for any $j\in [N]$, taking $k\in\mathcal{K}$ such that $i_k=j$ and letting $k\to\infty$, we have from the fact  $(x^{k+1})_{k\in\mathcal{K}}\rightarrow \bar{x}$ because of \eqref{bounded-descent-temp-4} and also the outer-continuity of subdifferential that
$$\textrm{dist}(0,\partial_{j}F(\bar{x}))=\lim_{k\in\mathcal{{K}}, i_k=j}\textrm{dist}(0,\partial_{i_k}F(x^{k+1}))=0,\, a.s.$$
Therefore, we complete the proof.
\end{proof}

\section{Empirical Markov chain dual coordinate ascent}
In this section, we consider a special case of the risk minimization problem in form of \eqref{regerm}. As we mentioned in section 1.1, if it is easy to get i.i.d. samples from the distribution $\Pi$ of the the sample space, then we can easily apply SDCA to \eqref{regerm}. However, there are some cases where the distribution $\Pi$ is not explicitly given and the samples are generated by a simulator, such as an MCMC sampler. Assume that the samples generated by the simulator forms an Markov chain with stationary distribution $\Pi$. Generating i.i.d. samples may take very long time in this case, instead we want to make use of all the samples on a sample trajectory, which are not i.i.d. distributed.

Assume that the sample space $\Xi$ is finite. Let $p_{\xi}\in (0,1)$ denote the probability mass of $\xi\in\Xi$. Then, problem \eqref{regerm} can be presented as
\begin{align}\label{regerm2}
    \Min_{w\in\mathbb{R}^n} ~~\sum_{\xi\in\Xi}p_{\xi}F(w^{\top}\xi)+\frac{\lambda}{2}\|w\|^2.
\end{align}
The objective function involves unknown parameters $(p_{\xi})_{\xi\in\Xi}$. One way to solve this problem is to do the following two steps: first run the simulator for long enough time to get an estimation of $(p_{\xi})_{\xi\in\Xi}$ (e.g. use frequency), denoted by $(\bar{p}_{\xi})_{\xi\in\Xi}$; then minimize \eqref{regerm} with $(\bar{p}_{\xi})_{\xi\in\Xi}$ by SDCA. 
The SDCA iteration in this case would be:
\begin{align}
v^k&=v^{k-1}+\frac{\alpha^k_{\xi^k} \xi^k}{\lambda }-\frac{\alpha^{k-1}_{\xi^k} \xi^k}{\lambda },\label{sdca-erm-1}\\
    \alpha^{k+1}_{\xi^k}&=\alpha^k_{\xi^k}-\gamma\Big((\xi^k)^{\top} v^k-\nabla F^*\big(\frac{-\alpha_{\xi^k}^k}{\bar{p}_{\xi^k}}\big)\Big),\label{sdca-erm-2}
\end{align}
where $\xi^k$ is uniformly randomly chosen from $\Pi$, $F^*$ is the conjugate function of $F$, $\alpha:=(\alpha_{\xi})_{\xi\in\Xi}$ are dual variables, and $\gamma$ is the stepsize.

Compared with SDCA, the advantage of MC-DCA is to do sampling and minimization simultaneously. However, it still needs to estimate $(p_{\xi})_{\xi\in\Xi}$. To address this issue, we introduce a practical way that approximates $p_{\xi}$ by keeping a count $c_{\xi}(k)$, the times that sample $\xi$ is chosen between iterations $1$ and $k$. We estimate $p_{\xi}$ by the sample frequency $c_{\xi}(k)/k$. We call it \emph{empirical MC-DCA}. The empirical MC-DCA iteration is almost the same as SDCA iteration except that $(\xi^k)_{k\geq 0}\subseteq\Xi$ is a Markov chain and $\bar{p}_{\xi^k}=c_{\xi}(k)/k$.


We provide the theoretical performance of the empirical MC-DCA under an lower bounded assumption on the frequency and geometric convergence of the Markov chain sampling.
\begin{assumption}\label{lower}
There exists  universal constant $\delta>0$ such that
for any $\xi\in\Pi$ and integer $k$, $c_{\xi}(k) / k\geq \delta>0$. There exists $0<\lambda<1$ such that $|\PP(\xi^{k}=\xi)-\PP(\hat{\xi}=\xi)|=O(\lambda^k)$, $\xi\in\Xi$ and $\hat{\xi}\in\Pi$.
\end{assumption}
The time-homogeneous, irreducible, and aperiodic Markov Chain can satisfy Assumption \ref{lower}.

\begin{corollary}\label{empi-mc-dca}
Let $\alpha^k:=(\alpha^k_{\xi})_{\xi\in\Xi}$ be generated by the empirical  MC-DCA and Assumption \ref{lower} hold. Then for the dual function given in \eqref{dual}, by denoting $A:=\sup_{1\leq i\leq k,\xi\in\Xi}\{\|\alpha^k_{\xi}\|^2\}$, it holds that
\begin{align}
    \EE\big[\min_{1\leq i\leq k}\|\nabla D(\alpha^{i})\|^2\big]=O(\frac{A\cdot\ln^2 k}{k}),
\end{align}
\end{corollary}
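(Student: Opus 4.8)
The plan is to recognize empirical MC-DCA as an instance of the inexact MC-BCD iteration \eqref{inCD} applied to the true dual objective $D$ of \eqref{dual}, and then to invoke the machinery behind Theorem \ref{th1} with a carefully estimated noise sequence. First I would verify that $D$ fits Assumption \ref{ass:2}: the quadratic part $\frac{\lambda}{2}\|A\alpha\|^2$ has a linear, hence Lipschitz, gradient, and since $F$ is strongly convex its conjugate $F^*$ has a Lipschitz gradient, so every $\nabla_\xi D$ and $\nabla D$ are Lipschitz. The only discrepancy between the empirical update \eqref{sdca-erm-2} and the exact inexact-MC-BCD step on $D$ is that $\nabla F^*$ is evaluated at $-\alpha^k_{\xi^k}/\bar p_{\xi^k}$ rather than at $-\alpha^k_{\xi^k}/p_{\xi^k}$; I would define the per-step error $\epsilon^k$ to be exactly this difference, so that the algorithm coincides with \eqref{inCD} verbatim and the partial-gradient evaluations of the bilinear term $(\xi^k)^\top v^k$ remain exact.

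Next I would bound $\|\epsilon^k\|$. Using Lipschitz continuity of $\nabla F^*$ together with the lower bounds $\bar p_{\xi^k}\ge\delta$ from Assumption \ref{lower} and $p_{\min}:=\min_{\xi}p_\xi>0$ (finite $\Xi$), I obtain $\|\epsilon^k\|\le \frac{L_{F^*}}{\delta\,p_{\min}}\|\alpha^k_{\xi^k}\|\,\big|p_{\xi^k}-c_{\xi^k}(k)/k\big|$, hence $\|\epsilon^k\|^2\le C\,A\,\big|p_{\xi^k}-c_{\xi^k}(k)/k\big|^2$ for an absolute constant $C$, where $A=\sup\|\alpha^k_\xi\|^2$ is the assumed uniform bound on the iterate blocks. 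This reduces the whole estimate to controlling the squared frequency error of the empirical estimator $c_{\xi}(t)/t$.

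The crux is then to show $\sum_{t=1}^k \EE\big|p_{\xi^t}-c_{\xi^t}(t)/t\big|^2 = O(\ln^2 k)$. For this I would exploit the geometric convergence in Assumption \ref{lower}: the bias $\big|\EE[c_\xi(t)/t]-p_\xi\big|$ is bounded by the averaged geometric tail $\frac1t\sum_{s\le t}O(\lambda^s)=O(1/t)$, while the fluctuation is handled by a concentration inequality for geometrically mixing Markov chains, giving $\big|c_\xi(t)/t-p_\xi\big|=O(\sqrt{\ln t/t})$ with probability $1-O(t^{-2})$ together with a trivial $O(1)$ bound on the complement. Combining these yields $\EE\big|p_\xi-c_\xi(t)/t\big|^2=O(\ln t/t)$ for each $\xi$, and summing over $t$ gives $\sum_{t\le k}O(\ln t/t)=O(\ln^2 k)$, so $\sum_{t\le k}\EE\|\epsilon^t\|^2=O(A\ln^2 k)$.

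Finally I would feed this cumulative-error estimate into the chain of inequalities in the proof of Theorem \ref{th1} — namely \eqref{eq:sum-dx}, \eqref{eq:bd-pg2} and \eqref{eq:bd-gd} — taking expectations throughout and retaining the partial sum $\sum_{t\le k}\EE\|\epsilon^t\|^2$ rather than assuming it converges. This produces $(k+1)\,\EE[\min_{i\le k}\|\nabla D(\alpha^i)\|^2]\le \frac{2}{\pi^*_{\min}}\big[C_1(\uptau)(D(\alpha^0)-\min D)+(C_2(\uptau)+4)\cdot O(A\ln^2 k)\big]$, with $C_1,C_2$ as in \eqref{eq:C-J-E}. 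Since the chain mixes geometrically, $\uptau=O(1)$ by \eqref{y-jianduan}, so dividing by $k+1$ gives the advertised $O(A\ln^2 k/k)$ rate. The main obstacle is the second-moment control of the empirical frequency in the third step: unlike the i.i.d.\ case the indicators $\mathbf{1}[\xi^s=\xi]$ are dependent, so both the variance bound and the $O(\sqrt{\ln t/t})$ deviation must be supplied by a Markov-chain concentration result, and it is precisely the summation $\sum_t \ln t/t$ that manufactures the extra logarithmic factor separating this rate from the $O(1/k)$ of the noiseless case.
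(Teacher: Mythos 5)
Your proposal is correct and follows the same overall skeleton as the paper: cast empirical MC-DCA as inexact MC-BCD on $D$ with noise $\epsilon^k=\nabla F^*\big(\tfrac{-\alpha^k_{\xi^k}}{p_{\xi^k}}\big)-\nabla F^*\big(\tfrac{-\alpha^k_{\xi^k}}{c_{\xi^k}(k)/k}\big)$, reduce $\|\epsilon^k\|^2$ to $O\big(A\,|p_{\xi^k}-c_{\xi^k}(k)/k|^2\big)$ via Lipschitzness of $\nabla F^*$ and the lower bounds on $p_\xi$ and $c_\xi(k)/k$, and then feed the cumulative noise into the chain \eqref{eq:sum-dx}, \eqref{eq:bd-pg2}, \eqref{eq:bd-gd} from Theorem \ref{th1}. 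The one place you diverge is the control of the empirical frequency: the paper computes the second moment $\EE\|c_\xi(k)-kp_\xi\|^2$ directly by expanding the square of $\sum_i \mathbf{1}_\xi(\xi^i)$ and using the covariance decay $\EE[\mathbf{1}_\xi(\xi^i)\mathbf{1}_\xi(\xi^j)]=p_\xi^2+O(\max\{\lambda^i,\lambda^{j-i}\})$ implied by Assumption \ref{lower}, obtaining $\EE|c_\xi(k)/k-p_\xi|^2=O\big(\tfrac{1}{k(1-\lambda)}\big)$ and hence $\sum_{t\le k}\EE\|\epsilon^t\|^2=O(A\ln k)$; you instead invoke an external high-probability concentration inequality for geometrically mixing chains, which costs an extra $\sqrt{\ln t}$ per step and yields the weaker per-step bound $O(\ln t/t)$ and cumulative bound $O(A\ln^2 k)$. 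Both land within the stated $O(A\ln^2 k/k)$, so your argument is valid; the paper's elementary covariance calculation is more self-contained (no citation of a Markov-chain Hoeffding bound needed) and in fact slightly sharper, whereas your route is the one that genuinely manufactures the $\ln^2 k$ in the statement. If you adopt the direct variance decomposition (variance $O(1/t)$ plus squared bias $O(1/t^2)$), the concentration step becomes unnecessary.
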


\begin{proof}
Obviously, the empirical MC-DCA can be regarded as the inexact MC-BCD to minimize $D(\alpha)$ with the noise
\begin{align}\label{empirical-noise}
    e^k=\nabla F^*\big(\frac{-\alpha_{\xi^k}^k}{p_{\xi}}\big)-\nabla F^*\big(\frac{-\alpha_{\xi^k}^k}{c_{\xi}(k)/k}\big).
\end{align}
We have presented the convergence result of the inexact MC-BCD in Theorem \ref{th1}. Thus, our work turns to bounding $e^k$. With Assumption \ref{lower},
\begin{align}
   \|e^k\|^2=O\big(A\cdot\frac{\|c_{\xi}(k)-k\cdot p_{\xi}\|^2}{k^2}\big).
\end{align}
We now estimate the upper bound of $\EE\|c_{\xi}(k)-k\cdot p_{\xi}\|^2$. Denote $\textbf{1}_{\xi}(\cdot)$ as the variable valued as $1$ when $\cdot=\xi$ and 0 when $\cdot$ being others. Then, $c_{\xi}(k)$ can be represented as  $$c_{\xi}(k)=\sum_{i=1}^k \textbf{1}_{\xi}(\xi^i).$$
Direct calculation then gives
\begin{align*}\label{uniformbetter-t0}
    \EE\|c_{\xi}(k)-k\cdot p_{\xi}\|^2=   \EE\|\sum_{i=1}^k \textbf{1}_{\xi}(\xi^i)-k\cdot p_{\xi}\|^2=\underbrace{\sum_{i=1}^k\EE\textbf{1}_{\xi}^2(\xi^i)}_{a)}\underbrace{-2kp_{\xi}\sum_{i=1}^k\EE\textbf{1}_{\xi}(\xi^i)}_{b)}+\underbrace{2\sum_{i<j}\EE\big(\textbf{1}_{\xi}(\xi^i)\textbf{1}_{\xi}(\xi^j)\big)}_{c)}+k^2p_{\xi}^2.
\end{align*}
With Assumption \ref{lower}, we have
\begin{align}
    a)=kp_{\xi}+O(\sum_{i=1}^k\lambda^i)=kp_{\xi}+O(\frac{1}{1-\lambda}).
\end{align}
Similarly, we can derive
\begin{align}
    b)=-2k^2p_{\xi}^2+O(\frac{1}{1-\lambda}).
\end{align}
Now, we focus on bounding $c)$. The difficulty is the dependence of the variables. Denote the $\sigma$-algebra $\chi^k$ generated by $\xi^0,\xi^1,\ldots,\xi^k$, i.e.,
$\chi^k:=\sigma(\xi^0,\xi^1,\ldots,\xi^k)$.
Thus, we first derive the conditional expectation and then use the property $\EE(\EE(\cdot\mid\chi^i))=\EE(\cdot)$. Noting that $i<j$, we have
\begin{align}
    \EE\big(\textbf{1}_{\xi}(\xi^i)\textbf{1}_{\xi}(\xi^j)\mid\chi^i\big)=\PP(\xi^{j}=\xi\mid\xi^{i}=\xi)\cdot\EE\big(\textbf{1}_{\xi}(\xi^i)\mid\chi^i\big).
\end{align}
Taking expectations on both sides, we are then led to
\begin{align}
    \EE\big(\textbf{1}_{\xi}(\xi^i)\textbf{1}_{\xi}(\xi^j)\big)=\PP(\xi^{j}=\xi\mid\xi^{i}=\xi)\cdot\EE\big(\textbf{1}_{\xi}(\xi^i)\big)=\PP(\xi^{j}=\xi\mid\xi^{i}=\xi)\cdot\PP(\xi^{i}=\xi).
\end{align}
With the facts that $\PP(\xi^{j}=\xi\mid\xi^{i}=\xi)=p_{\xi}+O(\lambda^{j-i})$
and
$\PP(\xi^{i}=\xi)=p_{\xi}+O(\lambda^i)$,
\begin{align}
    \EE\big(\textbf{1}_{\xi}(\xi^i)\textbf{1}_{\xi}(\xi^j)\big)=p_{\xi}^2+O(\max\{\lambda^{i},\lambda^{j-i}\}).
\end{align}
Obviously, it holds
\begin{align}\label{uniformbetter-t1}
    \sum_{i<j\leq k}\max\{\lambda^{i},\lambda^{j-i}\}=\sum_{t=1}^{\lceil \frac{k}{2}\rceil} c_t\lambda^t.
\end{align}
Now, we investigate what $c_t$ exact is. For any $1\leq t\leq \lceil\frac{k}{2}\rceil$, $\lambda^t$ only appears in the cases (I) $i=t$ and $j-i\geq t$ or (II) $j-i=t$ and $i\geq t$. Thus, we can get
\begin{align}
c_{t}\leq \sharp(\textrm{I})+\sharp(\textrm{II})=k-2t+k-2t=2k-4t.
\end{align}
Thus, we derive
\begin{align}\label{uniformbetter-t2}
    \sum_{i<j\leq k}\max\{\lambda^{i},\lambda^{j-i}\}\leq\sum_{t=1}^{\lceil \frac{k}{2}\rceil} (2k-4t)\lambda^t=O(\frac{k}{1-\lambda}).
\end{align}
With \eqref{uniformbetter-t1} and \eqref{uniformbetter-t2}, we then get
\begin{align}
    c)=(k^2-k)p_{\xi}+O(\frac{k}{1-\lambda}).
\end{align}
Substituting the bounds of a), b) and c) to \eqref{uniformbetter-t0}, we get
\begin{align}
    \EE\|c_{\xi}(k)-k\cdot p_{\xi}\|^2=O(\frac{k}{1-\lambda}).
\end{align}
Thus, the expectation of noise is bounded as
\begin{align}
    \EE\|e^k\|^2=O(\frac{A}{k(1-\lambda)}).
\end{align}
By Slight modification of proof of inexact MC-BCD, we then prove the result.
\end{proof}

\begin{figure} 
\begin{center}
    \includegraphics[width=0.4\textwidth]{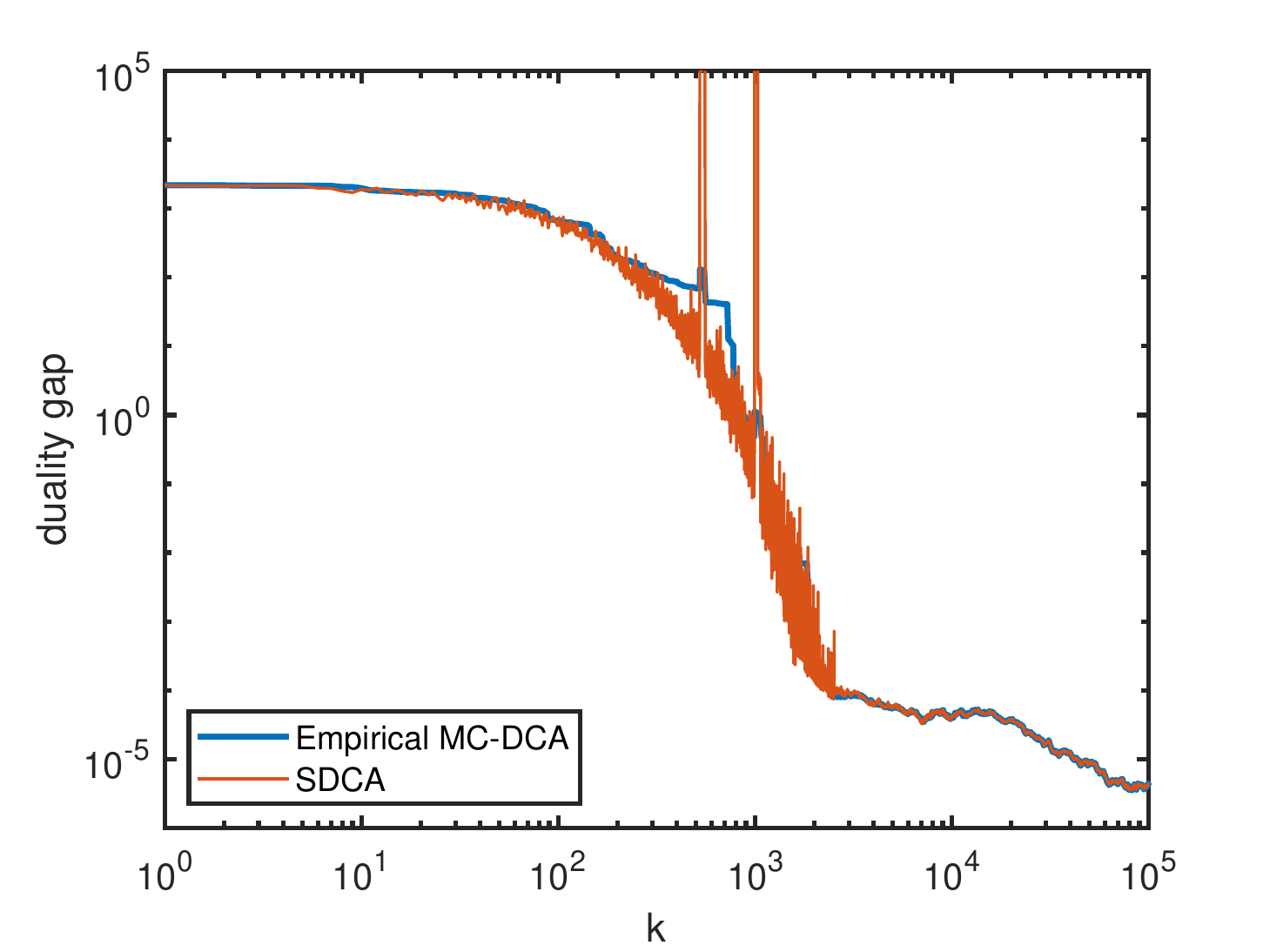}
\end{center}
\caption{Duality gap after $k$ samples and $k$ iterations of the algorithms. Empirical MC-DCA runs each iteration along with sampling. SDCA obtains all samples first and then runs $k$ iterations with $(\bar{p}_{\xi})_{\xi\in\Xi}$ estimated from the $k$ samples.
}\label{fig:empirical_MCDCA}
\end{figure}

We also use a numerical experiment to verify the convergence of empirical MC-DCA and comparison with SDCA. We created a 40-state Markov chain with non-uniform stationary distribution. We randomly generated $x\in\mathbb{R}^20$, $\xi_i\in\mathbb{R}^{20}, i=1,\ldots,40$, and set $b_i=\xi_i^{\top}x$, where $i$ is a state of the Markov chain. We also set $F_i(x)=x-b_i$ and $\lambda=0.1$.  We compare duality gap of empirical MC-DCA and SDCA when doing same number of samples and iterations. The MC-DCA runs each iteration along with sampling, which SDCA does sampling first and then do minimization with $(\bar{p}_{\xi})_{\xi\in\Xi}$ estimated from the samples. Figure \ref{fig:empirical_MCDCA} shows that empirical MC-DCA can reach the same convergence rate as SDCA. However, empirical MC-DCA can minimize along sampling and does not need to store the sample space in memory. It can reach any accuracy as long as the sampling process continues. However, SDCA requires the knowledge of the sample space at each iteration. To improve the accuracy, it must resume the sampling process to re-estimate $(p_{\xi})_{\xi\in\Xi}$.

\section{Conclusion}
In summary, we propose a new class of BCD method which can be implemented by visiting a random sequence of nodes in a network. As long as the network is connected, the method can run without the knowledge of its topology and other global parameters. Besides networks, our method can be also used for certain Markov decision processes. It can also run along with MCMC samples for empirical risk minimization when the underlying distribution cannot be sampled directly. The convergence of our method is proved for both convex and nonconvex objective functions with constant stepsize. Inexact subproblems are allowed. When the objective is convex and strongly convex, sublinear and linear convergence rates are proved, respectively.
\section*{Acknowledgments}
The work of Y. Sun and W. Yin is supported in part by NSF DMS-1720237 and ONR N000141712162. The work of Y. Xu is supported in part by NSF DMS-1719549.  

\end{document}